\newtheorem{thm}{Theorem}
\newtheorem{lemma}[thm]{Lemma}
\newtheorem{prop}[thm]{Proposition}
\newcommand{\p}{{\mathbb P}}
\newcommand{\e}{{\mathbb E}}
\newcommand{\D}{{\mathrm d}}
\newcommand{\bs}{\boldsymbol}
\newcommand{\ba}{{\bs \alpha}}
\newcommand{\R}{{\mathbb R}}
\newcommand{\1}[1]{\mbox{\rm\large  1}_{\{#1\}}}
\newcommand{\mat}[1]{\boldsymbol{\bs #1}}
\begin{document}
\title{On scale functions for L\'evy processes with negative phase-type jumps}
\author[J.\ Ivanovs]{Jevgenijs Ivanovs}
\begin{abstract}
We provide a novel expression of the scale function for a L\'evy processes with negative phase-type jumps.
It is in terms of a certain transition rate matrix which is explicit up to a single positive number.
A monotone iterative scheme for the calculation of the latter is presented and it is shown that the error decays exponentially fast.
Our numerical examples suggest that this algorithm allows to employ phase-type distributions with a hundred of phases, 
which is problematic when using the known formula for the scale function in terms of roots.  Extensions to other distributions, such as matrix-exponential and infinite-dimensional phase-type, can be anticipated.
\end{abstract}

\keywords{fluid flow model, iterative scheme, phase-type distribution, scale function, rational transform}
\subjclass[2010]{60G51}
\maketitle

\section{Introduction}

The theory of fluctuations of L\'evy processes with one-sided jumps is abundant in various identities and expressions, see the review papers~\cite{avram_review,scale_review}, the monographs~\cite{dkebicki2015queues,kyprianou} and references therein for a long list of formulas and applications.
These concern first passage times and overshoots, extremes, reflection and refraction, limiting distributions and distributions at exponential times, Poissonian observation, optimal control, and a great variety of other models and objectives.
Most of these expressions are in terms of a so-called scale function $W_q:\R_+\mapsto\R_+$ identified by its transform, where $q\geq 0$ is the killing rate of the underlying L\'evy process.
In queueing context, the distribution function of the workload at an independent exponential time (and at $\infty$) in a L\'evy-driven queue can be succinctly expressed in terms of~$W_q$, both in infinite and finite buffer cases.
We note that \textsc{Google Scholar} finds about  2070 articles containing `scale function' in the context of L\'evy processes as of today. 

Calculation of scale functions by inversion is a feasible but nontrivial task~\cite{scale_review}, and it may be prohibitive in a common scenario when a family of scale functions is needed for a large number of killing rates~$q$. 
Even though a generous number of (semi-) explicit examples of scale functions can be engineered~\cite{hubalek, scale_review}, arguably the most important explicit examples are given by processes with jumps of rational transform and so-called meromorphic processes~\cite{meromorphic}. Our focus is on the first class of processes.

We consider a spectrally-negative L\'evy process $(X_t)_{t\geq 0}$ with finite jump activity, which is often called perturbed Cram\'er-Lundberg risk process in actuarial science literature.
That is,
\begin{equation}\label{eq:X}X_t=dt+\sigma B_t-\sum_{i=1}^{N_t}C_i,\qquad t\geq 0,\end{equation}
where $B_t$ is a standard Brownian motion, $N_t$ is a Poisson process of rate $\lambda>0$, $C_i$ is a sequence of independent and identically distributed positive random variables, and all components are independent.
It is assumed that $\sigma\geq 0,d\in\R$ with $d>0$ when $\sigma=0$ to avoid monotone paths.
It is noted that certain series expansions of scale functions for such processes have been recently obtained in~\cite{lan_wil} (these are based on convolution powers of the jump distribution).
We assume, however, that $C_i$ has a phase-type (PH) distribution, that is, the distribution of the life-time of some transient continuous time Markov chain with finitely many states, often called phases.
These form a dense class of distributions on $\R_+$ (a proper subclass of distributions with rational transform) and lead to a profusion of tractable models in applied probability, see~\cite[Ch.\ 3]{APQ} or~\cite{ME}.

The scale function for a spectrally-negative L\'evy process with PH jumps can be expressed in terms of the zeros of a certain rational function, see~\cite[Prop.\ 2.1]{yamazaki} and \cite{scale_review}.
The number of these (possibly complex) roots is normally close to the number of phases.
Hence finding the roots may become problematic when the number of phases is substantial, which is a common scenario in applications.  
In this regard we note that~\cite{asmussen_laub} use 20 to 100 phases in their life insurance application.
For a survey on computational methods for two-sided L\'evy processes with PH jumps we refer to~\cite{asmussen_levy_martingales}.

An alternative approach is to use fluid embedding of PH jumps to arrive at a second order fluid flow model (Markov modulated Brownian motion) and to treat the given problem in that context, see~\cite{mordecki_PH,ph_asmussen_avram,asmussen_levy_martingales} and references in the latter.
There is a well-developed theory for such models~\cite{asmussen_fluid,iva_palm} and, in fact, a more general matrix-valued scale  function for an MMBM is given in~\cite[Ch.\ 7.7]{thesis} in terms of certain basic matrices.  
This, however, requires dealing with matrix calculus, partitioning of phases, and various further complications, not to mention a certain necessary experience. Moreover, such approach largely ignores the extensive literature on one-sided L\'evy processes and related models.

In this note we use some basic insights from the analysis of fluid flow models to establish an alternative expression of the scale function, see Theorem~\ref{thm:bm} and Theorem~\ref{thm:cpp} corresponding to $\sigma>0$ and $\sigma=0$.
These simple expressions are in terms of a certain transition rate matrix~$\mat G$, which is \emph{explicit up to a single positive number}.
In fact, it is fully explicit in the special case of $q=0$ and $\e X_1>0$.
Furthermore, we provide an iterative scheme yielding a monotone sequence of approximations of the unknown number, see Proposition~\ref{prop:bm} and Proposition~\ref{prop:cpp}, and demonstrate both theoretically and numerically its fast convergence.
This allows to employ PH distributions with hundreds of phases, while still having access to a plethora of results and expressions in terms of scale functions.
It is noted that the eigenvalues of the transition rate matrix are the above mentioned roots (not counting the non-negative one) and hence our iterative scheme can also be used to efficiently compute the roots.
Extensions to distributions with rational transforms and to (heavy-tailed) infinite-dimensional PH distributions~\cite{bladt_samorod} can be anticipated, but they require further investigation. 
Finally let us mention that in a closely related field of matrix analytic methods the use of (monotone) iterative schemes is standard and is normally preferred to the spectral method~\cite{ramaswami}.

\section{Preliminaries}
\subsection{Spectrally-negative L\'evy processes}
Let $(X_t)_{t\geq 0}$ be a general L\'evy process with no positive jumps, but not a process with a.s.\ decreasing paths.
The Laplace exponent of $X$ is denoted by $\psi(\theta)=\log \e e^{\theta X_1},\theta \geq 0$, and the first passage times (above $x$ and below $-x$) are given by
\[\tau_x^+=\inf\{t\geq 0:X_t>x\},\qquad \tau_{-x}^-=\inf\{t\geq 0:X_t<-x\},\qquad x\geq 0.\]
For $q\geq 0$ let $\Phi_q\geq 0$ be the right-most non-negative root of $\psi(\theta)=q$, which is known to satisfy the basic identity 
\[\e e^{-q\tau_x^+}=\p(\tau_x^+<e_q)=e^{-\Phi_q x},\qquad x\geq 0.\]
We write $e_q$ for an independent exponentially distributed random variable of rate $q\geq 0$ which is $\infty$ for $q=0$. 
This $e_q$ can be seen as the killing time of $X$, and thus $q\geq 0$ is just another parameter - the killing rate.

For every $q\geq 0$ there is a so-called scale function $W_q:[0,\infty)\mapsto [0,\infty)$, which is a continuous, non-decreasing function identified by its transform
\begin{equation}\label{eq:transform}\int_0^\infty e^{-\theta x}W_q(x)\D x=1/(\psi(\theta)-q),\qquad \theta>\Phi(q),\end{equation}
see~\cite[Thm.\ VII.8]{bertoin_book} or~\cite[Thm.\ 8.1]{kyprianou}.
The scale function is strictly positive for $x>0$ and it solves the basic two-sided exit problem:
\[\e(e^{-q\tau_x^+};\tau_x^+<\tau_{-y}^-)=W_q(y)/W_q(x+y),\qquad x,y\geq 0,x+y>0.\]
We refer to~\cite{avram_review,scale_review} for a long list of useful formulas based on~$W_q$.

Throughout the rest of this paper we assume that
\begin{equation}\label{eq:assumption}\tag{A1}
q>0\qquad\text{ or }\qquad \psi'(0)\neq 0,
\end{equation}
which merely excludes the case of a non-killed process $X$ with zero expectation. This case normally can be treated by taking the limit as $q\downarrow 0$.

\subsection{Application to queueing}
Here we briefly discuss L\'evy-driven queues and provide some basic formulas illustrating importance of $W_q$ in this domain. 
The material of this subsection is not used in the rest of the paper.
The workload process $(V_t)_{t\geq 0}$ of a queue driven by $-X_t$ (the classical case with positive jumps only) is defined by
\[V_t=v-X_t+\sup_{s\leq t}\{(v-X_s)^-\},\qquad v\geq 0,\]
where $v$ is the starting position and $x^-=-\min(x,0)$ is the negative part of~$x$, see~\cite{dkebicki2015queues}.
Under the stability condition $\mu:=\e X_1>0$ the workload process $V_t$ admits a weak limit $V_\infty$ which, according to the classical duality relation,  is given by
$\sup\{-X_t:t\geq 0\}$. Now
\[\p(V_\infty\leq x)=\mu W_0(x),\qquad x\geq 0,\]
see~\cite[(8.15)]{kyprianou}.
Hence $W_0$  is proportional to the distribution function of the stationary workload in the case when~$\e X_1>0$.
This can also be readily verified by taking the Laplace transform and using the generalized Pollaczek-Khinchine formula together with~\eqref{eq:transform}.

Distribution function of $V$ at an independent exponential time~$e_q$ can be derived from~\cite[Thm.\ 8.11]{kyprianou}:
\[\p(V_{e_q}\in \D x)=q\Big(\frac{1}{\Phi_q}e^{-\Phi_q v}W'_q(x)-W_q(x-v)\Big)\D x,\qquad x>0\]
together with a point mass $\p(V_{e_q}=0)=\frac{q}{\Phi_q}e^{-\Phi_q v}W_q(0)$, where $W_q$ is 0 for negative arguments.
There are many more useful formulas concerning L\'evy-driven queues and the ruin theory analogues scattered in the literature.

\subsection{A convenient representation of the scale function}
Define the first hitting time of a level $x\in \R$ by
\[\tau_{\{x\}}=\inf\{t>0:X_t=x\},\]
which a.s.\ coincides with $\tau_x^+$ for $x>0$.
The following representation of the scale function is useful in various contexts, but is not widely known:
\begin{equation}\label{eq:scale_alt}W_q(x)=\frac{1}{\psi'(\Phi_q)}\Big(e^{\Phi_qx}-\p(\tau_{\{-x\}}<e_q)\Big),\qquad x\geq 0,\end{equation}
see~\cite[Eq.\ (12)]{iva_palm} and also~\cite[Thm.\ 1]{pistorius}, \cite[Eq.\ (95)]{scale_review} for some related formulas.
Due to assumption~\eqref{eq:assumption} the denominator $\psi'(\Phi_q)\neq 0$.
This formula will serve as a basis for deriving our result.

For completeness let us mention that~\eqref{eq:scale_alt}  can be phrased in terms of the local time $L_t$ at zero, see~\cite[Ch.\ V]{bertoin_book} for the definition. 
In the case~\eqref{eq:X} with $\sigma=0$ the scaled local time $d L_t$ simply counts the epochs when the level~0 is hit.
Now $\e L_{e_q}=1/\psi'(\Phi_q)$ is the expected local time of the killed process and,  by the additive property of $L_t$ we have
\[e^{-\Phi_qx}W_q(x)=\e L_{e_q}-\p(\tau_x^+<e_q)\p(\tau_{\{-x\}}<e_q)\e L_{e_q}=\e L_{\tau_x^+\wedge e_q}=\e L_{\tau_{-x}^-\wedge e_q},\qquad x\geq 0.\]
In words, this is the expected local time at 0 of the killed process collected up to either $\tau_x^+$ or~$\tau_{-x}^-$. 

\subsection{Phase-type jumps}
Consider a continuous time Markov chain on a set of $n$ transient states with an initial distribution $\ba$ and $n\times n$ transition rate matrix $\mat{T}$.
Here $\ba$ is a row vector with $n$ non-negative elements summing up to~1, and $\mat{t}=-\mat{T1}\geq \mat 0$ is a column vector with $n$ elements giving the killing rates; we write $\mat{1}$ and $\mat{0}$ for the column vectors of ones and zeros, respectively.
The distribution of the life-time of this Markov chain is denoted by ${\rm PH}(\ba,\mat{T})$. 
Without loss of generality we assume that $\ba,\mat{T}$ are such that the Markov chain has a positive probability to visit any state.

The density of ${\rm PH}(\ba,\mat{T})$ has a matrix exponential form $f(x)=\ba e^{\mat{T} x}\mat{t},x>0$ and its transform is a rational function
\[\int_0^\infty e^{-\theta x}f(x)\D x=\ba (\theta\mat{I}-\mat{T})^{-1}\mat{t}=\frac{P(\theta)}{Q(\theta)},\qquad \theta\geq 0.\]
Here $P$ and $Q$ are polynomials of degree $p-1$ and $p$, respectively, with no common zeros in~$\mathbb C$. It must be that $1\leq p\leq n$, and in the case $p=n$ we say that the representation $(\ba,\mat{T})$ is minimal.
It is noted that one can always provide a minimal matrix exponential representation, whereas a minimal PH representation need not exist~\cite[Sec.\ 4.2]{ME}.
Let us also point out that the zeros of $Q$ are the eigenvalues of $\mat{T}$ and the latter have negative real parts.

In the following we assume that our L\'evy process $X$ has the form given in~\eqref{eq:X} with $C_i\sim{\rm PH}(\ba,\mat{T})$.
Equivalently,
\begin{equation}\label{eq:psi}\tag{A2}\psi_q(\theta):=\psi(\theta)-q=\frac{1}{2}\sigma^2\theta^2+d\theta+\lambda(\ba (\theta\mat{I}-\mat{T})^{-1}\mat{t}-1)-q=\frac{\tilde P(\theta)}{Q(\theta)}, \qquad\theta\geq 0.\end{equation}
Note that $\tilde P$ and $Q$ have no common zeros and the degree of $\tilde P$ is $p+2-\1{\sigma=0}$.
Finally, $\e C_i=\ba(-\mat{T})^{-1}\mat{1}$ and so 
$\psi'(0)=\e X_1=d+\lambda\ba\mat{T}^{-1}\mat{1}$.

\subsection{The scale function in terms of roots}
Recall that $\psi_q(\theta)$ has $p+2-\1{\sigma=0}$ zeros in~$\mathbb C$ counting multiplicities for any $q\geq 0$.
Let $\mathcal Z_q$ be the set of these zeros excluding the single zero at~$\Phi_q$:
\begin{equation}\label{eq:Zq}\mathcal Z_q=\{z\in\mathbb C: \psi_q(z)=0,z\neq \Phi_q\}.\end{equation}
For any $z\in \mathcal Z_q$ it must be that $\Re(z)\leq 0$, which follows from the standard properties of~$\psi$.
Assuming that the zeros in $\mathcal Z_q$ are simple, there is the identity
\begin{equation}\label{eq:Wroots}
W_q(x)=\frac{e^{\Phi_qx}}{\psi'(\Phi_q)}+\sum_{z\in \mathcal Z_q}\frac{e^{z x}}{\psi'(z)},\qquad x\geq 0,
\end{equation}
and the number of elements of $\mathcal Z_q$ is $p+\1{\sigma>0}$.
Indeed, by taking transform we obtain a partial fraction decomposition of $1/\psi_q(\theta)$.
This result and also its less neat version in the case of multiple zeros can be found in~\cite[Prop.\ 2.1]{yamazaki} and in~\cite[Sec.\ 5.4]{scale_review}.
In the case $q=0,\psi'(0)<0$ the set $\mathcal Z_q$ contains $0$, which is somewhat unclear in the above cited works.

Recall that for a minimal PH representation $(\ba,\mat{T})$ we have $p=n$, and so we need to find $n+\1{\sigma>0}$ zeros of the rational function in~\eqref{eq:psi} in the left half of the complex plane (computing $\Phi_q$ is trivial thanks to convexity of~$\psi_q$).
As discussed before, in some applications this number may exceed 100, and thus general root finding methods may fail to find all the zeros or may require certain adaptations. 
One may also use efficient numerical procedures~\cite{bini} for locating zeros of polynomials, but that requires to pick out the numerator or to multiply by $\det(\theta\mat I-\mat T)$.
Importantly, such methods require careful use of multiprecision due to numerical instability.

\section{The scale function in terms of a transition rate matrix}
In view of~\eqref{eq:scale_alt} we aim to find a simple expression of the hitting probability $\p(\tau_{\{-x\}}<e_q)$ for $x\geq 0$.
This task can be achieved by fluid embedding: Consider a continuous time Markov chain $J_t$ on a set of $n+1$ states (phases) with a transition rate matrix 
\[\begin{pmatrix}
-\lambda-q &\lambda\ba\\
\mat{t}& \mat{T}
\end{pmatrix},\]
and assume that the level process $Y_t$ starts at 0 and evolves as an independent linear Brownian motion $\sigma B_t-d t$ when $J_t=1$ and as a linear drift $t$ with unit slope when $J_t\neq 1$.
This can be compactly stated as 
\[Y_0=0,\qquad \D Y_t=\1{J_t=1}(\sigma\D B_t-d\D t)+\1{J_t\neq 1}\D t,\]
where the process $Y$ is killed upon termination of $J$. The bivariate process $(Y_t,J_t)$ is a special case of a well-studied Markov modulated Brownian motion (MMBM), see~\cite{asmussen_fluid,ivanovs_MMBM}.
Note that the process $Y$ with time intervals $J_t\neq 1$ deleted has the law of $-X$ process killed at~$e_q$.
Thus, for $x\geq 0$ the probability $\p(\tau_{\{-x\}}<e_q)$ coincides with the probability of $(Y,J)$ hitting $(x,1)$ given $J_0=1$. 

Letting $\varsigma_x=\inf\{t\geq 0:Y_t>x\}$ we see that $(J_{\varsigma_x})_{x\geq 0}$ is a Markov chain, which may or may not enter state~1 according to $\sigma>0$ and $\sigma=0$.
We treat these two cases separately.

\subsection{Brownian component is present}
Assume that $\sigma>0$ and let $\mat{G}$ be $(n+1)\times(n+1)$ transition rate matrix of the first passage Markov chain~$(J_{\varsigma_x})_{x\geq 0}$.
Now \[\p(\tau_{\{-x\}}<e_q)=\sum_j\p(J_{\varsigma_x}=j|J_{\varsigma_0}=1)\nu_j=\mat{e}_1e^{\mat{G}x}\mat{\nu},\]
where $\mat{e}_1=(1,0,\ldots,0)$ and $\mat{\nu}$ is an $(n+1)$ column vector with $\nu_j$ being the probability that given $J_0=j$ the process $Y$ ever hits level 0, which must be in phase~$1$.
Clearly, $\nu_1=1$ and the other elements are given by
\[\int_0^\infty e^{\mat{T}x}\mat{t}\p(\tau_x^+<e_q)\D x=\int_0^\infty e^{\mat{T}x}e^{-\Phi_q x}\D x\mat{t}=(\Phi_q\mat{I}-\mat{T})^{-1}\mat{t}.\]
Furthermore, a linear Brownian motion hits $(0,\infty)$ immediately a.s.\ and we see from the definition of the process~$(Y,J)$ that
\begin{equation}\label{eq:Gnu}\mat{G}=\begin{pmatrix}
-a &\mat{b}\\
\mat{t}& \mat{T}
\end{pmatrix}, \qquad \mat{\nu}=\begin{pmatrix}
1\\
(\Phi_q\mat{I}-\mat{T})^{-1}\mat{t}
\end{pmatrix}\end{equation}
for some number $a>0$ and $n$-vector $\mat{b}>\mat{0}$ such that $\mat{b1}\leq a$.
It is thus left to characterize $a$ and~$\mat{b}$.
Before doing so we note that $a=\mat{b1}$ when $q=0,\e X_1\leq 0$ and $a>\mat{b1}$ otherwise, which corresponds to the first passage chain $J_{\varsigma_x}$ being recurrent and transient, respectively.
Note that $\mat \nu,a,\mat b$ depend on the killing rate $q\geq 0$.

The matrix~$\mat G$ is a fundamental object in the analysis of an MMBM, and it is normally computed using an iterative scheme based on the characterizing equation~\eqref{eq:G} below. 
In this setting we have an almost explicit formula which, as we next show, depends on a single unknown $a>0$ being a certain fixed point.

\begin{thm}\label{thm:bm}
Assuming~\eqref{eq:assumption} and \eqref{eq:psi} with $\sigma,\lambda>0,d\in \R,q\geq 0$ we have
\[W_q(x)=\frac{1}{\psi'(\Phi_q)}\Big(e^{\Phi_qx}-\mat{e}_1e^{\mat{G}x}\mat{\nu}\Big),\qquad x\geq 0,\]
where the transition rate matrix $\mat G$ and a non-negative vector $\mat{\nu}$ are given in~\eqref{eq:Gnu}.
For $q=0,\e X_1>0$ we have $a=\frac{2d}{\sigma^2}>0$ and $\mat{b}=\frac{2\lambda}{\sigma^2}\ba(-\mat{T})^{-1}>\mat 0$,
and otherwise $\mat{b}\geq \mat 0$ and $a\geq\mat{b1}$ are uniquely characterized by
\begin{equation}\label{eq:ab}\frac{1}{2}\sigma^2(a^2+\mat{b}\mat{t})-da=\lambda+q,\qquad \frac{1}{2}\sigma^2(\mat{b}\mat{T}-a\mat{b})+d\mat{b}=-\lambda\ba\end{equation}
which implies 
\begin{equation}\label{eq:interval}2d^+/\sigma^2<a<\big(d+\sqrt{d^2+2\sigma^2(\lambda+q)}\big)/\sigma^2.\end{equation} 
\end{thm}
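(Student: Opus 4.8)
The plan is to obtain the displayed formula for $W_q$ for free and then concentrate entirely on the characterization of $a$ and $\mat{b}$. Indeed, the representation~\eqref{eq:scale_alt} of $W_q$ in terms of $\p(\tau_{\{-x\}}<e_q)$, together with the fluid identity $\p(\tau_{\{-x\}}<e_q)=\mat{e}_1e^{\mat{G}x}\mat{\nu}$ and the block form~\eqref{eq:Gnu}, is already assembled before the statement, so the scale-function formula is immediate and the whole content is the description of the scalar $a$ and the vector $\mat{b}$.

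First I would pin down $a,\mat{b}$ algebraically. The matrix $\mat{G}$ is the first-passage generator of the MMBM $(Y,J)$ and as such solves the characterizing quadratic matrix equation~\eqref{eq:G}. Substituting the block form~\eqref{eq:Gnu} and reading off the four blocks, the two lower blocks collapse to identities involving only $\mat{t},\mat{T}$ (they merely reproduce the fluid part of the generator), while the upper-left scalar block and the upper-right row-vector block yield exactly the two relations in~\eqref{eq:ab}. This is routine but sign-sensitive, since the drift of $Y$ enters the first-passage equation with a definite sign, so I would carry it out carefully.

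Next come existence and the special case. Existence of a genuine sub-stochastic generator $\mat{G}$, hence of a solution $(a,\mat{b})$ with $\mat{b}\ge\mat{0}$, $a\ge\mat{b}\mat{1}$, $a>0$, is guaranteed by the general MMBM theory, and the transient/recurrent dichotomy noted before the statement gives $a=\mat{b}\mat{1}$ precisely when $q=0$ and $\e X_1\le 0$. To isolate $a$ I would right-multiply the second relation in~\eqref{eq:ab} by $\mat{1}$, use $\mat{T}\mat{1}=-\mat{t}$ and $\ba\mat{1}=1$, and combine with the first relation to obtain the clean identity $(\tfrac12\sigma^2 a-d)(a-\mat{b}\mat{1})=q$. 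When $q=0,\e X_1>0$ the chain is transient, so $a-\mat{b}\mat{1}>0$ forces $\tfrac12\sigma^2 a=d$, i.e.\ $a=2d/\sigma^2$; feeding this into the second relation collapses it to $\tfrac12\sigma^2\mat{b}\mat{T}=-\lambda\ba$, giving $\mat{b}=\tfrac{2\lambda}{\sigma^2}\ba(-\mat{T})^{-1}$ as claimed. The same identity yields the lower bound in~\eqref{eq:interval}: for $q>0$ transience makes both factors positive, so $a>2d/\sigma^2$, and together with $a>0$ this gives $a>2d^+/\sigma^2$.

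Finally, uniqueness together with the bounds~\eqref{eq:interval}. I would reduce~\eqref{eq:ab} to a single scalar equation by solving the second relation for $\mat{b}=\tfrac{2\lambda}{\sigma^2}\ba(\rho\matI-\mat{T})^{-1}$ with $\rho:=a-2d/\sigma^2$ (legitimate since $\rho>0$ on the asserted interval, which also forces $\mat{b}>\mat{0}$) and substituting into the first, obtaining $\phi(a):=\tfrac12\sigma^2a^2-da-(\lambda+q)+\lambda\ba(\rho\matI-\mat{T})^{-1}\mat{t}=0$. Since $\ba(\rho\matI-\mat{T})^{-1}\mat{t}=\e e^{-\rho C}$ is the phase-type transform, $\phi$ is strictly convex in $a$ and blows up at both ends of its domain; evaluating it at the two endpoints of the interval in~\eqref{eq:interval} ($\phi\le 0$ at the lower endpoint $2d^+/\sigma^2$, with value $-q$ when $d>0$, and $\phi>0$ at the upper endpoint, which is the larger root of $\tfrac12\sigma^2a^2-da=\lambda+q$) localizes a single admissible root in the open interval and excludes the remaining, spurious root lying at or below $2d^+/\sigma^2$; the upper bound itself follows directly from the first relation together with $\mat{b}\mat{t}>0$ and $a>2d^+/\sigma^2$. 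The main obstacle I anticipate is the bookkeeping across the boundary sub-cases, in particular $q=0$ with $\e X_1\le 0$ (where $a=\mat{b}\mat{1}$ and the product identity degenerates) and the sign of $d$, and confirming that the root selected by the convexity argument is exactly the one furnished by the probabilistic $\mat{G}$ rather than one of the other elements of $\mathcal Z_q$.
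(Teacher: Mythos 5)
Your proposal tracks the paper's proof almost step for step through the main body: the reduction to characterizing $(a,\mat b)$ via~\eqref{eq:scale_alt}, reading~\eqref{eq:ab} off the first block row of the quadratic matrix equation~\eqref{eq:G}, the product identity $(\tfrac12\sigma^2a-d)(a-\mat{b}\mat{1})=q$ (this is exactly~\eqref{eq:ab2} in the paper), the explicit case $q=0,\e X_1>0$, and the lower bound for $q>0$. The genuine divergence is in the uniqueness argument. The paper never reduces to a scalar equation: it invokes the MMBM result that~\eqref{eq:G} has a unique solution $\mat G$ in the class of transient (resp.\ recurrent) transition rate matrices, and uses~\eqref{eq:ab2} only to decide which regime an admissible solution must fall into. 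Your route --- eliminate $\mat b=\frac{2\lambda}{\sigma^2}\ba((a-2d/\sigma^2)\mat I-\mat T)^{-1}$ and study the resulting scalar function $\phi$ --- is more elementary, and it actually buys more than you claim: substituting $\theta=a-2d/\sigma^2$ gives $\phi(a)=\psi_q(a-2d/\sigma^2)$ identically (the quadratic part transforms as $\tfrac12\sigma^2a^2-da=\tfrac12\sigma^2\theta^2+d\theta$), so your equation says that $a-2d/\sigma^2$ is a nonnegative root of $\psi_q$, i.e.\ $a=\Phi_q+2d/\sigma^2$; your convexity-and-endpoints argument is then precisely the standard localization of $\Phi_q$, and it is sound for $q>0$. (One can cross-check this identity against Lemma~\ref{lem:zeros} by taking the trace of $\mat G$.)

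There is one concrete gap, in the sub-case $q=0$, $\e X_1<0$, which you flag as an ``obstacle'' but do not resolve. There your endpoint evaluation gives $\phi(2d^+/\sigma^2)=0$ rather than $<0$, so strict convexity alone does not locate the second root on the correct side of $2d^+/\sigma^2$ (you need the sign of $\phi'$ there, i.e.\ of $\psi'(0)=\e X_1$); more seriously, the admissible solution in this regime is the recurrent one with $a=\mat{b}\mat{1}$, for which the product identity degenerates to $0=0$ and yields no lower bound on $a$, and without $a>2d/\sigma^2$ you are not entitled to the resolvent representation of $\mat b$ that your whole scalar reduction rests on. The paper hits the same wall: it rules out $a>\mat{b}\mat{1}$ by the contradiction $a=2d/\sigma^2\Rightarrow\mat{b}\mat{1}>a$, and then concedes that $a<2d/\sigma^2$ is excluded only ``with some effort'' or by continuity in $q\downarrow0$. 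You would need to supply an argument of that kind to close this case.
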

\begin{proof}
In view of~\eqref{eq:scale_alt} and the above considerations, it is only required to characterize $a>0$ and $\mat b$. 
Note that the transition rate matrix of $J$ is irreducible, since we have excluded certain degenerate PH representations.
From the basic MMBM theory, see e.g.~\cite{asmussen_fluid}, \cite[Thm.\ 2]{jordan}, \cite[Cor.\ 4.15]{thesis} (the latter allows for phase-dependent killing), we have
the matrix equation
\begin{equation}\label{eq:G}\frac{1}{2}\sigma^2\mat\Delta_{(1,0,\ldots,0)} \mat{G}^2-\mat\Delta_{(-d,1,\ldots,1)}\mat{G}+\begin{pmatrix}
-\lambda-q &\lambda\ba\\
\mat{t}& \mat{T}
\end{pmatrix}=\mat{O},\end{equation}
where $\mat \Delta_{\mat d}$ stands for a diagonal matrix with $\mat{d}$ on the diagonal. 
Observe that the first row readily provides the stated identities in~\eqref{eq:ab}, whereas the other rows yield the above observed form of~$\mat G$.
Furthermore, the above equation uniquely characterizes $\mat G$ given that $\mat G$ is a recurrent transition rate matrix $(a=\mat{b1})$ when $q=0,\e X_1\leq 0$ and transient $(a>\mat{b1})$ otherwise.

The first equation in~\eqref{eq:ab} can be replaced by
\begin{equation}\label{eq:ab2}(\frac{1}{2}\sigma^2 a-d)(a-\mat{b1})=q,\end{equation}
which follows from $\frac{1}{2}\sigma^2(\mat{b}\mat{t}+a\mat{b1})-d\mat{b1}=\lambda$ obtained by right-multiplying the second equation by $\mat{1}$.
For $q=0,\e X_1>0$ we must have $a>\mat{b1}$  and so $a=2d/\sigma^2$ which readily yields the above stated $\mat b$.
For $q>0$ we see from~\eqref{eq:ab2} that $a>\mat{b1}$, as required. 
Finally, assume $q=0,\e X_1=d-\lambda\ba(-\mat T)^{-1}\mat 1<0$. If $a=2d/\sigma^2\geq 0$ then from the second equation in~\eqref{eq:ab} we find $\mat b\mat 1>2d/\sigma^2=a$. This contradiction means that $a=\mat{b1}$ as required, and the characterization is now complete.
In the latter case $a<2d/\sigma^2$ is impossible which can be shown directly with some effort, or by appealing to continuity of $a$ in $q\downarrow 0$. 
Now the bounds in~\eqref{eq:interval} easily follow, 
 and the proof is complete.
 \end{proof}

Consider the explicit case $q=0,\e X_1>0$ and note that $\Phi_q=0$ and $\mat \nu=\mat 1$. Thus we have
\[1-\mu W_0(x)=\mat e_1 e^{\mat G x}\mat 1,\qquad x\geq 0\]
which can be identified with the ruin probability $\p(\inf_{t\geq 0}\{x+X_t\}<0)$. Note also that the rate $a=2d/\sigma^2$ does not depend on $\lambda$ and the PH distribution of jumps. 
An analogous formula in the case of no Brownian component is well known, see~\cite[Cor.\ IX.3.1]{ruin} and the following subsection.

In the above explicit case $(a,\mat b)$ solve~\eqref{eq:ab}, but we do not claim that they are thus characterized without additionally assuming that $a>\mat{b 1}$.
In fact, one can provide simple examples with an additional solution satisfying $a=\mat{b1}>0$.
In the other cases 
\[\mat b=\frac{2\lambda}{\sigma^2} \ba((a-2d/\sigma^2)\mat{I}-\mat{T})^{-1}\] can be easily computed if the value of $a$ is given, and so $a$ is a single unknown. We provide a monotone iterative scheme to compute this unknown in Section~\ref{sec:iterative}.

\subsection{No Brownian component}
Here we assume that $\sigma=0$ and $d>0$.
Now the first passage chain $(J_{\varsigma_x})_{x\geq 0}$ lives on the states $2,\ldots,n+1$, and we let $\mat G$ be its $n\times n$ transition rate matrix.
Moreover, let $\mat \pi$ be a row vector with $n$ elements denoting the distribution of $J_{\varsigma_0}$ given $J_0=1$.

\begin{thm}\label{thm:cpp}
Assuming~\eqref{eq:assumption} and \eqref{eq:psi} with $\sigma=0,d,\lambda>0,q\geq 0$ we have
\[W_q(x)=\frac{1}{\psi'(\Phi_q)}\Big(e^{\Phi_qx}-\mat\pi e^{\mat{G}x}\mat{\nu}\Big),\quad x\geq 0,\qquad\mat G=\mat T+\mat{t\pi},\quad \mat \nu=(\Phi_q\mat{I}-\mat{T})^{-1}\mat{t}.\]
For $q=0,\e X_1>0$ we have
$\mat\pi=\frac{\lambda}{d}\ba(-\mat T)^{-1}>\mat 0$, and otherwise $\mat \pi\geq 0$ with $\mat{\pi 1}\leq 1$ is uniquely characterized by
\begin{equation}\label{eq:pi}\mat\pi\Big((\lambda+q-d\mat{\pi t})\mat I-d\mat T\Big)=\lambda\ba\end{equation}
which implies $\mat{\pi t}\in(0,\lambda/d)$. 
\end{thm}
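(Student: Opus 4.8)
The plan is to follow the same two-step strategy that established Theorem~\ref{thm:bm}, adapting it to the pure-jump case $\sigma=0$. First I would set up the MMBM characterization. Since $\sigma=0$, the level process $Y$ moves upward with unit slope whenever $J_t\neq 1$ and behaves as the linear drift $-dt$ when $J_t=1$; because $d>0$, phase~1 drives $Y$ downward and the first-passage chain $(J_{\varsigma_x})_{x\geq 0}$ cannot occupy state~1. This is exactly why $\mat G$ is $n\times n$ and lives on states $2,\dots,n+1$, and why we must record separately the entrance distribution $\mat\pi$ of $J_{\varsigma_0}$ given $J_0=1$. I would argue that upon reaching a new maximum the chain re-enters the ``jump phases'' via the rate vector $\mat t$ (the killing rates of $\mat T$) renormalized through $\mat\pi$, which gives the generator form $\mat G=\mat T+\mat{t\pi}$; the vector $\mat\nu=(\Phi_q\mat I-\mat T)^{-1}\mat t$ records the probability of ever hitting the starting level, just as in~\eqref{eq:Gnu}. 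Feeding the resulting identity $\p(\tau_{\{-x\}}<e_q)=\mat\pi e^{\mat Gx}\mat\nu$ into~\eqref{eq:scale_alt} yields the stated scale-function formula.

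Next I would derive the characterizing equation~\eqref{eq:pi} for $\mat\pi$. The natural route is to specialize the fundamental MMBM matrix equation~\eqref{eq:G} to $\sigma=0$, where the quadratic term vanishes and one is left with $-\mat\Delta_{(-d,1,\dots,1)}\mat G+(\text{generator})=\mat O$. Working with the full $(n+1)\times(n+1)$ block structure and isolating the row/column corresponding to state~1 should reproduce, after multiplying through by the appropriate rate factors, the linear system $\mat\pi\big((\lambda+q-d\mat{\pi t})\mat I-d\mat T\big)=\lambda\ba$. Alternatively, and perhaps more transparently, I would argue probabilistically: starting in phase~1 the process drifts down at rate $d$ until either a PH-type event or killing occurs, so $\mat\pi$ satisfies a renewal-type balance equation reflecting one excursion below the current level; the scalar $\mat{\pi t}$ then plays the role that $a-\mat{b1}$ played before. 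For the explicit case $q=0,\e X_1>0$, one checks $\mat{\pi t}$ must be such that the chain is transient and solves the reduced system, giving $\mat\pi=\frac{\lambda}{d}\ba(-\mat T)^{-1}$.

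The main obstacle I anticipate is establishing \emph{uniqueness} of $\mat\pi$ together with the sign/normalization constraints $\mat\pi\geq\mat 0$, $\mat{\pi1}\leq 1$, and the interval bound $\mat{\pi t}\in(0,\lambda/d)$. Equation~\eqref{eq:pi} is nonlinear because of the term $d\mat{\pi t}$ coupling back into the matrix, so one cannot simply invert a fixed matrix; instead I would treat $s:=\mat{\pi t}\geq 0$ as a scalar parameter, solve the \emph{linear} system $\mat\pi=\lambda\ba\big((\lambda+q-ds)\mat I-d\mat T\big)^{-1}$ for each fixed $s$, and then impose the scalar consistency condition $\mat\pi\mat t=s$. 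The map $s\mapsto\mat\pi(s)\mat t$ should be shown to be monotone on the relevant interval (using that $\big((\lambda+q-ds)\mat I-d\mat T\big)^{-1}\geq\mat O$ entrywise whenever $\lambda+q-ds$ exceeds the Perron eigenvalue of $d\mat T$), which forces a unique crossing and hence a unique $\mat\pi$. The endpoint analysis—ruling out $s=0$ and $s=\lambda/d$, and matching the boundary case $q=0,\e X_1\leq 0$ by continuity as $q\downarrow 0$—parallels the treatment of $a$ in Theorem~\ref{thm:bm} and is where the probabilistic recurrence/transience dichotomy of $(J_{\varsigma_x})_{x\geq 0}$ must be invoked to pin down the correct solution.
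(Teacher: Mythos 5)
Your overall route---fluid embedding, reading off $\mat G=\mat T+\mat{t\pi}$ and $\mat\nu$ from the structure of $(Y,J)$, and extracting \eqref{eq:pi} from the first row of the MMBM matrix equation---is the same as the paper's. The genuine gap is in your uniqueness step. You propose to treat $s=\mat{\pi t}$ as a scalar, solve the linear system for $\mat\pi(s)=\lambda\ba((\lambda+q-ds)\mat I-d\mat T)^{-1}$, and argue that monotonicity of $s\mapsto\mat\pi(s)\mat t$ ``forces a unique crossing.'' It does not: that map is increasing \emph{and convex} in $s$ on the relevant interval (the resolvent $\ba(u\mat I-\mat T)^{-1}\mat t$ is decreasing and convex in the killing rate $u$), so it can meet the diagonal twice, and in general it does---the paper remarks after the theorem that \eqref{eq:pi} may admit a second solution with $\mat{\pi1}=1$ besides the intended one. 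Hence the scalar fixed-point equation alone cannot characterize $\mat\pi$, and the side condition $\mat{\pi1}\leq 1$ is not by itself enough to discard the spurious root.

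What the paper does instead is to cite the MMBM characterization result (Cor.~4.15 of the thesis): the matrix equation $-\mat\Delta_{(-d,1,\ldots,1)}\left(\begin{smallmatrix}\mat\pi\\ \mat I\end{smallmatrix}\right)\mat G+\left(\begin{smallmatrix}-\lambda-q&\lambda\ba\\ \mat t&\mat T\end{smallmatrix}\right)\left(\begin{smallmatrix}\mat\pi\\ \mat I\end{smallmatrix}\right)=\mat O$ determines $(\mat G,\mat\pi)$ uniquely once one imposes transience ($\mat{\pi1}<1$) or recurrence ($\mat{\pi1}=1$) of the first-passage chain; note that with $\sigma=0$ the relevant equation is this one, involving the $(n+1)\times n$ factor, rather than a direct specialization of \eqref{eq:G}. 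The remaining work is to decide which regime applies, which the paper does via the scalar identity $(\lambda-d\mat{\pi t})(1-\mat{\pi1})=\mat{\pi1}q$ obtained by right-multiplying \eqref{eq:pi} by $\mat 1$: for $q>0$ this forces $\mat{\pi1}<1$; for $q=0$, $\e X_1>0$ it forces $\mat{\pi t}=\lambda/d$ and hence the explicit $\mat\pi=\frac{\lambda}{d}\ba(-\mat T)^{-1}$; and for $q=0$, $\e X_1<0$ one checks that $\mat{\pi t}=\lambda/d$ would give $d(1-\mat{\pi1})=\e X_1<0$, a contradiction, so $\mat{\pi1}=1$, with the bound $\mat{\pi t}<\lambda/d$ then obtained by letting $q\downarrow 0$. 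Your closing sentence correctly identifies the recurrence/transience dichotomy as the missing ingredient, but as written the proposal rests on a false uniqueness claim and needs either the cited MMBM result or a concrete substitute for it.
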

\begin{proof}
Note that $\p(\tau_{\{-x\}}<e_q)=\mat\pi e^{\mat{G}x}\mat{\nu}$ with $\mat \nu$ of the stated form. Furthermore, the form of $\mat G$ follows from the very simple structure of the process $(Y,J)$.
More precisely, the rates are given by $\mat T$ plus the rates of transition via the state~1. In view of~\eqref{eq:scale_alt} it is left to identify~$\mat\pi$.
From the MMBM theory~\cite[Cor.\ 4.15]{thesis} we know that
\begin{equation*}-\mat\Delta_{(-d,1,\ldots,1)}\begin{pmatrix}\mat \pi\\ \mat I\end{pmatrix}\mat{G}+\begin{pmatrix}
-\lambda-q &\lambda\ba\\
\mat{t}& \mat{T}
\end{pmatrix}\begin{pmatrix}\mat \pi\\ \mat I\end{pmatrix}=\mat{O}
\end{equation*}
uniquely characterizes the transition rate matrix~$\mat G$ and vector $\mat\pi$ under the transience/recurrence requirement, which is $\mat{\pi 1}<1$ and $\mat{\pi 1}=1$, respectively.
The first row gives
\[d\mat\pi\mat G-(\lambda+q)\mat\pi+\lambda\ba=\mat{0},\]
and the second gives the above observed form of~$\mat G$, and thus~\eqref{eq:pi} follows.

Again we may right-multiply~\eqref{eq:pi} by $\mat 1$ to derive a further useful equation:
\begin{equation}\label{eq:pi2}(\lambda-d\mat{\pi t})(1-\mat{\pi 1})=\mat{\pi 1}q.\end{equation}
For $q=0,\e X_1>0$ we must have $\mat{\pi 1}<1$ and so $\mat{\pi t}=\lambda/d$ and the expression for $\mat \pi$ follows.
For $q>0$ we have  $\mat{\pi 1}<1$ as required. If $q=0,\mat{\pi t}=\lambda/d$ then $\mat\pi=\frac{\lambda}{d}\ba(-\mat T)^{-1}$ showing that $d(1-\mat{\pi 1})=\e X_1\geq 0$.
Thus for $\e X_1<0$ we must have $\mat{\pi 1}=1$, as required. The characterization result is now complete, and the bounds on $\mat\pi\mat t$ are obvious apart from the case $q=0,\e X_1<0$.
The latter case can be treated by taking $q\downarrow 0$. 
\end{proof}

The explicit case $q=0,\e X_1>0$ results in a well-known identity~\cite[Cor.\ IX.3.1]{ruin} for the ruin probability:
\[\p(\inf_{t\geq 0}\{x+X_t\}<0)=1-\mu W_0(x)=\mat \pi e^{\mat G x}\mat 1,\qquad x\geq 0,\]
where $\mat\pi=\frac{\lambda}{d}\ba(-\mat T)^{-1}$ and $\mat G=\mat T+\mat{t\pi}$.
Similarly to the Brownian case, there may exist another solution $\mat\pi$ of~\eqref{eq:pi} with $\mat{\pi 1}=1$.
In the other cases the vector $\mat \pi$ can be easily computed if the value of the number $\mat \pi\mat t$ is provided.

\subsection{Relation to the formula in terms of roots}
Consider the expressions of $W_q$ in Theorem~\ref{thm:bm} and in Theorem~\ref{thm:cpp}.
Given that the corresponding transition rate matrix $\mat G$ is diagonalizable these expressions can be rewritten as $\sum_i c_i e^{\zeta_i x}$, where $\zeta_i$ runs through $\Phi_q$ and the eigenvalues of $\mat G$.
This gives an expression analogous to~\eqref{eq:Wroots}, but even more can be said.

\begin{lemma}\label{lem:zeros}
Assume~\eqref{eq:assumption} and minimality of PH representation $(\ba,\mat T)$. Then the roots in~\eqref{eq:Zq} are simple iff the $n+\1{\sigma>0}$ eigenvalues of $\mat G$ are distinct, in which case the two sets coincide.
\end{lemma}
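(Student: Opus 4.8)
The plan is to pin down the characteristic polynomial of $\mat G$ in terms of the rational function $\psi_q=\tilde P/Q$ from \eqref{eq:psi} and to show that, counted with multiplicities, the eigenvalues of $\mat G$ are exactly the zeros of $\psi_q$ other than $\Phi_q$. Minimality gives $\deg Q=n$ and, together with the stated coprimality of $\tilde P$ and $Q$, guarantees that the zeros of $\tilde P$ coincide with the zeros of $\psi_q$, i.e.\ with $\mathcal Z_q\cup\{\Phi_q\}$, each multiplicity being genuine. Granting the multiset identity between the eigenvalues of $\mat G$ and $\mathcal Z_q$, the lemma is immediate: every element of $\mathcal Z_q$ is a simple zero precisely when each eigenvalue of $\mat G$ has algebraic multiplicity one, that is, when the $n+\1{\sigma>0}$ eigenvalues are distinct, and in that case the underlying sets coincide. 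Here I use that $\Phi_q$ is a simple zero of $\psi_q$, which follows from \eqref{eq:assumption} since $\psi_q'(\Phi_q)=\psi'(\Phi_q)\neq 0$.

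For $\sigma>0$ I would exploit the quadratic matrix equation \eqref{eq:G}. Writing $\mat F(\theta)=\tfrac12\sigma^2\mat\Delta_{(1,0,\ldots,0)}\theta^2-\mat\Delta_{(-d,1,\ldots,1)}\theta+\mat Q$, with $\mat Q$ the rate matrix of $J$ appearing in \eqref{eq:G}, that equation reads $\mat F(\mat G)=\mat O$ in the right-evaluation sense, and a direct multiplication check yields the factorization $\mat F(\theta)=\big(\tfrac12\sigma^2\mat\Delta_{(1,0,\ldots,0)}\theta+\tfrac12\sigma^2\mat\Delta_{(1,0,\ldots,0)}\mat G-\mat\Delta_{(-d,1,\ldots,1)}\big)(\theta\mat I-\mat G)$. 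Taking determinants, a Schur-complement computation gives $\det\mat F(\theta)=\det(\mat T-\theta\mat I)\,\psi_q(\theta)=(-1)^n\tilde P(\theta)$, while the left cofactor is block upper-triangular, so its determinant is $(-1)^n\tfrac12\sigma^2\big(\theta-(a-2d/\sigma^2)\big)$. Hence $\det(\theta\mat I-\mat G)=2\tilde P(\theta)/\big(\sigma^2(\theta-(a-2d/\sigma^2))\big)$, a genuine polynomial of degree $n+1$. The case $\sigma=0$ is handled directly: the matrix determinant lemma gives $\det(\theta\mat I-\mat G)=Q(\theta)\big(1-\mat\pi(\theta\mat I-\mat T)^{-1}\mat t\big)$, and substituting $\mat\pi$ from \eqref{eq:pi} together with the resolvent identity turns the bracket into $\psi_q(\theta)/\big(d(\theta-(\lambda+q-d\mat\pi\mat t)/d)\big)$, whence $\det(\theta\mat I-\mat G)=\tilde P(\theta)/\big(d(\theta-(\lambda+q-d\mat\pi\mat t)/d)\big)$.

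In both cases $\det(\theta\mat I-\mat G)$ equals $\tilde P$ divided by a single linear factor $\theta-\rho$, so the eigenvalues of $\mat G$ together with the spurious root $\rho$ account, with multiplicity, for all zeros of $\psi_q$. The main obstacle is to identify $\rho=\Phi_q$. I would argue this by location: $\mat G$ is a transition rate matrix, so by Gershgorin its eigenvalues lie in $\{\Re\le 0\}$ (the diagonal entries are non-positive and at least as large in modulus as the corresponding non-negative off-diagonal row sums), whereas $\rho=a-2d/\sigma^2$, respectively $\rho=(\lambda+q-d\mat\pi\mat t)/d$, is non-negative -- strictly positive for $q>0$ by \eqref{eq:ab2}, respectively \eqref{eq:pi2}, and zero in the explicit case $q=0,\e X_1>0$. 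Since $\Phi_q$ is the unique non-negative zero of $\psi_q$, this forces $\rho=\Phi_q$, and dividing out the simple factor $\theta-\Phi_q$ leaves exactly $\mathcal Z_q$ with multiplicities, completing the argument. The only delicate point is the recurrent boundary case $q=0,\e X_1<0$, where $\mat G$ carries a zero eigenvalue and $0\in\mathcal Z_q$; there I would confirm $\rho>0$, so that $0$ is correctly assigned to $\mathcal Z_q$ rather than to the spurious factor, either directly from the bounds in \eqref{eq:interval} and $\mat\pi\mat t\in(0,\lambda/d)$, or by appealing, as in the proofs of Theorems~\ref{thm:bm} and \ref{thm:cpp}, to continuity as $q\downarrow 0$.
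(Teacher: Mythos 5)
Your argument is correct, but it is a genuinely different proof from the one in the paper. The paper's proof is a two-line soft argument: take the Laplace transform of the expressions for $W_q$ in Theorems~\ref{thm:bm} and~\ref{thm:cpp}, equate with $1/\psi_q(\theta)$ via~\eqref{eq:transform}, extend by analytic continuation, and count zeros of $\psi_q$; the eigenvalues of $\mat G$ are then matched to $\mathcal Z_q$ through the poles of $\mat e_1(\theta\mat I-\mat G)^{-1}\mat \nu$ (respectively $\mat\pi(\theta\mat I-\mat G)^{-1}\mat\nu$). You instead compute the characteristic polynomial of $\mat G$ directly: for $\sigma>0$ via the standard factorization $\mat F(\theta)=\bigl(\tfrac12\sigma^2\mat\Delta_{(1,0,\ldots,0)}(\theta\mat I+\mat G)-\mat\Delta_{(-d,1,\ldots,1)}\bigr)(\theta\mat I-\mat G)$ coming from the right-evaluation identity $\mat F(\mat G)=\mat O$ in~\eqref{eq:G}, together with a Schur complement giving $\det\mat F(\theta)=(-1)^n\tilde P(\theta)$ (here minimality is what lets you identify $\det(\theta\mat I-\mat T)$ with $Q(\theta)$); for $\sigma=0$ via the matrix determinant lemma and the resolvent identity. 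The identification of the leftover linear factor with $\theta-\Phi_q$ by sign considerations ($\rho>0$ in the non-explicit cases by~\eqref{eq:interval} and $\mat\pi\mat t<\lambda/d$, versus $\Re(z)\le 0$ for $z\in\mathcal Z_q$) is sound, and the boundary cases are handled correctly. Your route is more computational but buys something the paper's does not state explicitly: the exact identity $\det(\theta\mat I-\mat G)=c\,\tilde P(\theta)/(\theta-\Phi_q)$, hence the multiset equality between eigenvalues of $\mat G$ and $\mathcal Z_q$ with multiplicities, without any simplicity hypothesis. Conversely, the paper's transform argument is shorter but leaves to the reader the (minimality-dependent) check that no pole of the resolvent expression cancels.
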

\begin{proof}
Take the transform of the expression in Theorem~\ref{thm:bm} and in Theorem~\ref{thm:cpp} and recall that it must coincide with~\eqref{eq:transform}.
Apply analytic continuation and use the fact that $\psi(\theta)-q$ has $n+1+\1{\sigma>0}$ zeros.
\end{proof}


\section{Monotone iterative schemes}\label{sec:iterative}
In this section we propose an efficient algorithm to compute the unknowns $a,\mat b$ characterized by~\eqref{eq:ab}, as well as an algorithm for $\mat\pi$ characterized by~\eqref{eq:pi}.
As mentioned above, a known $a$  yields $\mat b$ and a known $\mat{\pi t}$ yields $\mat \pi$, and so there is a single unknown number underlying the new representations of the scale function.
In the MMBM theory there are various iterative schemes for calculation of $\mat G$ and the associated initial distributions, see~\cite{asmussen_fluid}.
Our present problem, however, has a very simple structure suggesting some particular schemes, which we discuss and analyze below.

Let us first introduce some terminology used in numerical analysis.
Consider an iterative scheme $x_{n+1}=f(x_n)$ for a differentiable function $f:\R\mapsto\R$ and some starting~$x_0$.
Assume that $x_n\to x\in\R$ where $x=f(x)$ is necessarily a fixed point, and observe that
\begin{equation}\label{eq:rate}\frac{x_{n+1}-x}{x_n-x}=\frac{f(x_n)-f(x)}{x_n-x}\to f'(x)\in[-1,1].\end{equation}
If $\mu:=|f'(x)|\in (0,1)$ then it is common to say that $x_n$ converges Q-linearly with rate $\mu$, see~\cite{numerics}.
One can easily check that in this case $\log |x_n-x|/n\to \log \mu$, 
that is, the error $|x_n-x|$ decays as $\mu^n$ in the logarithmic sense. 

\subsection{Brownian component is present}
Reconsider Theorem~\ref{thm:bm} in the non-explicit case and recall that $a>2d/\sigma^2$. 
For any $a_0> 2d/\sigma^2$ we define $(\mat b_n,a_n)_{n\geq 1}$ recursively from~\eqref{eq:ab} as follows:
\[ \mat{b}_n=\frac{2\lambda}{\sigma^2} \ba((a_{n-1}-2d/\sigma^2)\mat{I}-\mat{T})^{-1},\qquad a_n=\big(d+\sqrt{d^2+\sigma^2(2\lambda+2q-\sigma^2\mat{b}_{n}\mat{t})}\big)/\sigma^2.\]
We now show that it is well-defined and converges monotonically to $(a,\mat{b})$. 
\begin{prop}\label{prop:bm}
Assume the conditions of Theorem~\ref{thm:bm} and that $q>0$ or $\e X_1<0$. 
For any $a_0>2d/\sigma^2$ the sequence $(a_n,\mat{b}_n)_{n\geq 1}$ is well-defined, satisfies the bounds in~\eqref{eq:interval}, and exhibits monotone convergence to $(a,\mat b)$:
\[a_n\uparrow a, \mat{b}_n\downarrow \mat{b}\quad\text{ for  }a_0\leq a, \qquad a_n\downarrow a, \mat{b}_n\uparrow \mat{b}\quad\text{ for  }a_0\geq a.\]

Moreover, the sequence $a_n$ converges $Q$-linearly with the rate
\[\frac{\mat b((a-2d/\sigma^2)\mat{I}-\mat{T})^{-1}\mat t}{2a-2d/\sigma^2}\in(0,1)\]
which is decreasing in $q\geq 0$.
\end{prop}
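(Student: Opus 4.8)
The plan is to collapse the two-line recursion into a single scalar iteration and then study the associated fixed-point map through convexity. Substituting $\mat b_n=\frac{2\lambda}{\sigma^2}\ba\mat R(a_{n-1})$ into the formula for $a_n$, where I abbreviate $c:=2d/\sigma^2$ and set
\[\mat R(x):=\big((x-c)\mat I-\mat T\big)^{-1}=\int_0^\infty e^{-(x-c)s}e^{\mat Ts}\,\D s,\qquad g(x):=\tfrac{2\lambda}{\sigma^2}\,\ba\mat R(x)\mat t,\]
the scheme becomes $a_n=f(a_{n-1})$ with $f(x)=\frac1{\sigma^2}\big(d+\sqrt{D(x)}\big)$ and $D(x):=d^2+\sigma^2\big(2\lambda+2q-\sigma^2 g(x)\big)$. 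Since the eigenvalues of $\mat T$ have negative real part, $\mat R(x)$ is finite and entrywise nonnegative for $x\geq c$, with $\mat R'=-\mat R^2\leq\mat 0$ and $\mat R''=2\mat R^3\geq\mat 0$; hence $g$ is nonnegative, decreasing and convex on $[c,\infty)$. First I would record elementary bounds: as $g$ is decreasing with $g(c)=\frac{2\lambda}{\sigma^2}\ba(-\mat T)^{-1}\mat t=\frac{2\lambda}{\sigma^2}$, one has $0<g(x)\leq 2\lambda/\sigma^2$ for $x>c$, whence $d^2+2\sigma^2q\leq D(x)<d^2+2\sigma^2(\lambda+q)$. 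This shows $D\geq0$ throughout, so $f$ is real-valued, and reading off the two-sided bound on $\sqrt{D}$ shows $f$ maps $(c,\infty)$ into the open interval of~\eqref{eq:interval}; in particular the recursion is well-defined and $(a_n)_{n\geq1}$ satisfies~\eqref{eq:interval}. Moreover $f$ is increasing, since the radicand increases with $x$.

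The heart of the argument is the auxiliary function
\[\phi(x):=\sigma^2x^2-2dx-(2\lambda+2q)+\sigma^2 g(x),\]
which is strictly convex ($\phi''=2\sigma^2+\sigma^2 g''\geq2\sigma^2>0$) and whose zeros are the fixed points of $f$. Indeed, $D(x)-(\sigma^2x-d)^2=-\sigma^2\phi(x)$, so on the interval in~\eqref{eq:interval} — where $\sigma^2x-d>0$ and which contains $a$ and every $a_n$ with $n\geq1$ — the sign of $f(x)-x$ is opposite to that of $\phi(x)$. I would then evaluate $\phi$ at the left endpoint: using $\ba(-\mat T)^{-1}\mat t=1$ gives $\phi(c)=-2q$, while simplifying $\sigma^2g'(c)=-2\lambda\ba(-\mat T)^{-1}\mat 1=-2\lambda\e C_1$ gives $\phi'(c)=2\e X_1$; also $\phi(x)\to+\infty$. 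If $q>0$ then $\phi(c)<0$, so a strictly convex function negative at $c$ and tending to $+\infty$ has a unique zero $a\in(c,\infty)$, with $\phi<0$ on $(c,a)$, $\phi>0$ on $(a,\infty)$, and $\phi'(a)>0$. If $q=0,\ \e X_1<0$ then $\phi(c)=0$ and $\phi'(c)=2\e X_1<0$, so convexity forces a second zero $a>c$ with the same sign pattern and $\phi'(a)>0$. Either way $a$ is the fixed point of Theorem~\ref{thm:bm}.

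With the sign pattern in hand the monotone convergence is routine for an increasing map. If $a_0\in(c,a)$ then $\phi<0$ on $(c,a)$ gives $f(a_0)>a_0$, while monotonicity gives $f(a_0)<f(a)=a$; inductively $a_{n-1}<a_n<a$, so $a_n\uparrow a$, and since $\mat b_n=\frac{2\lambda}{\sigma^2}\ba\mat R(a_{n-1})$ with $\mat R$ decreasing, $\mat b_n\downarrow\mat b$. Symmetrically $a_0>a$ gives $a<a_1<a_0$, hence $a_n\downarrow a$ and $\mat b_n\uparrow\mat b$, the claimed dichotomy. For the rate I would apply~\eqref{eq:rate}: differentiating $f$ and using $\sqrt{D(a)}=\sigma^2a-d$ together with $\mat R'=-\mat R^2$ yields
\[f'(a)=\frac{\lambda\,\ba\mat R(a)^2\mat t}{\sigma^2a-d}=\frac{\mat b\,\big((a-2d/\sigma^2)\mat I-\mat T\big)^{-1}\mat t}{2a-2d/\sigma^2},\]
the second equality using $\ba\mat R=\frac{\sigma^2}{2\lambda}\mat b$ from~\eqref{eq:ab}. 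Positivity of $f'(a)$ follows from irreducibility (so $\mat R>\mat 0$ entrywise), and the strict bound $f'(a)<1$ is precisely $\phi'(a)>0$ via the identity $\phi'(a)=2(\sigma^2a-d)\big(1-f'(a)\big)$; thus $a_n$ converges $Q$-linearly with this rate.

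It remains to show the rate decreases in $q$, and this is the step I expect to require the most care. Writing $a=a(q)$, I would first argue $a(q)$ is strictly increasing: for $q_2>q_1$ one has $\phi_{q_2}(a(q_1))=\phi_{q_1}(a(q_1))-2(q_2-q_1)=-2(q_2-q_1)<0$, and since $\phi_{q_2}$ crosses zero upward at its relevant root, $a(q_2)>a(q_1)$. Because $\mat R(x)$ is entrywise decreasing in $x$, the map $q\mapsto\mat R(a(q))$ is entrywise decreasing, so the numerator $\lambda\,\ba\mat R(a(q))^2\mat t$ decreases while the denominator $\sigma^2a(q)-d$ increases; hence $f'(a)$ decreases in $q$. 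The only genuine obstacle in the whole scheme is the strict bound $f'(a)<1$: it is equivalent to $\phi'(a)>0$, which is why the strict convexity of $\phi$ together with the boundary values $\phi(c)=-2q$ and $\phi'(c)=2\e X_1$ is the crux — it simultaneously pins down the unique relevant root, delivers the sign pattern driving monotone convergence, and produces the strict contraction factor.
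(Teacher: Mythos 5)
Your proposal is correct, and it reaches the conclusion by a genuinely different route than the paper. The paper keeps the recursion two-dimensional and argues probabilistically: interpreting $a_{n-1}-2d/\sigma^2$ as a killing rate gives entrywise monotonicity of the resolvents, hence monotonicity of $(a_n,\mat b_n)$ and existence of a limit solving~\eqref{eq:ab}; the limit is then identified with $(a,\mat b)$ by invoking the uniqueness part of the characterization in Theorem~\ref{thm:bm} (checking $a^*\geq \mat b^*\mat 1$ via~\eqref{eq:ab2} for $q>0$, and ruling out $a^*=2d/\sigma^2$ for $q=0,\e X_1<0$ by a local expansion in $\epsilon$); finally $f'(a)<1$ is obtained from the probabilistic bounds $\mat b\mat 1\leq a$ for the numerator and $2a-2d/\sigma^2>a$ for the denominator. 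You instead collapse everything onto the scalar map $f$ and the strictly convex auxiliary function $\phi$, whose boundary data $\phi(c)=-2q$, $\phi'(c)=2\e X_1$ (the same computation $\lambda\ba(-\mat T)^{-1}\mat 1>d$ that the paper uses in its $\epsilon$-argument) simultaneously deliver existence and uniqueness of the fixed point in $(c,\infty)$, the sign pattern of $f(x)-x$ driving monotone convergence, and the strict contraction $f'(a)<1$ via $\phi'(a)=2(\sigma^2a-d)(1-f'(a))>0$ --- so you do not need the characterization result of Theorem~\ref{thm:bm} at all, only that its $a$ lies in~\eqref{eq:interval} and solves~\eqref{eq:ab}. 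Your treatment of the monotonicity in $q$ is also more complete than the paper's one-line assertion. One small point to tidy: the equivalence between $f(x)>x$ and $\phi(x)<0$ needs $\sigma^2x-d>0$, which can fail at the initial point $a_0>2d/\sigma^2$ when $d<0$; this is harmless because $a_1=f(a_0)$ already lies in~\eqref{eq:interval} (where the equivalence holds) and the stated monotonicity concerns $(a_n)_{n\geq1}$, but the first comparison step should be phrased through $a_0\lessgtr a$ and the monotonicity of $f$ rather than through the sign of $\phi(a_0)$.
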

\begin{proof}
Assume that $a_{n-1}> 2d/\sigma^2$ which is true for $n=1$.
Then $\mat{T}-(a_{n-1}-2d/\sigma^2)\mat{I}$ is a transient transition rate matrix implying that 
$\mat{b}_n$ is well defined and has strictly positive elements.
Moreover, we observe that $\sigma^2\mat b_n\mat t<2\lambda\ba(-\mat T)^{-1}\mat t=2\lambda$ and hence $a_n> 2d/\sigma^2\vee 0$.
Interpreting $a_{n-1}-2d/\sigma^2$ as a killing rate we see that $\mat b_m< \mat b_n$ iff $a_{m-1}> a_{n-1}$, in which case $a_m>a_n$.
Thus $a_n>0$ is monotone and so is $\mat b_n>\mat 0$ but in the opposite way.
Hence they must have a finite limit $a^*\geq 0,\mat b^*\geq \mat 0$ which solves~\eqref{eq:ab}.

For $q>0$ we also find from~\eqref{eq:ab2} that $a^*\geq \mat b^*\mat 1$ and hence characterization result in Theorem~\ref{thm:bm} can be applied. 
In the case $q=0,\e X_1<0$ we only need to show that $a^*\neq 2d/\sigma^2$, and so we may assume that $d\geq 0$. Recall that $a_{n-1}>2d/\sigma^2$ and hence it is sufficient to show that $a_{n-1}=2d/\sigma^2+\epsilon$ implies $a_{n}>a_{n-1}$ for all small enough $\epsilon>0$.
That is, we need to show that 
\[\sqrt{d^2+\sigma^2(2\lambda-2\lambda\ba(\epsilon \mat I-\mat T)^{-1}\mat t)}> \epsilon\sigma^2+d,\]
which is equivalent to
\[2\lambda(1-\ba(\epsilon \mat I-\mat T)^{-1}\mat t)/\epsilon >\epsilon\sigma^2+2d.\]
The left hand side converges to the derivative $2\lambda\ba(-T)^{-1}\mat 1$ as $\epsilon\downarrow 0$ which exceeds $2 d$ by assumption of the negative drift, completing the proof of the first part.

It is left to analyze the rate of convergence of~$a_n$, and according to~\eqref{eq:rate} we consider
\[f(a)=\big(d+\sqrt{d^2+\sigma^2(2\lambda+2q-2\lambda\ba((a-2d/\sigma^2)\mat{I}-\mat{T})^{-1}\mat{t})}\big)/\sigma^2.\] Differentiating while using $a=f(a)$ we obtain
\[f'(a)=\frac{\lambda\ba((a-2d/\sigma^2)\mat{I}-\mat{T})^{-2}\mat{t}}{\sigma^2 a-d}=\frac{\mat b((a-2d/\sigma^2)\mat{I}-\mat{T})^{-1}\mat{t}}{2a-2d/\sigma^2}.\]
The numerator is upper bounded by $\mat b(-\mat T)^{-1}\mat t=\mat b\mat 1\leq a$, whereas the denominator exceeds~$a$.
Hence $f'(a)<1$ and its strict positivity follows from $\mat b\mat 1>0$.  Finally, $a$ is an increasing function of $q$ and thus the convergence rate is a decreasing function. 
 \end{proof}

It is important to note that the starting value $a_0=2d/\sigma^2$ can be used for $q>0$, whereas for $q=0$ it will result in a constant sequence $a_n=2d/\sigma^2$ irrespective of the drift and the true solution.
Observe that an alternative iterative scheme can be obtained by expressing $a$ from~\eqref{eq:ab2}.
This, however, does not yield a monotone sequence and it also exhibits a much slower convergence in our numerical examples below.

The number of iterations depends on the starting position $a_0$. For large $q>0$ we expect $\mat b$ to be close to $\mat 0$ and so a good starting position 
is given by the upper bound in~\eqref{eq:interval} corresponding to $\mat b=\mat 0$.
If the scale matrix is computed for a number of different~$q$ then we may start from the largest value and use the last solution as initial $a_0$ for the following~$q$. 
In general, as a rule of thumb one may use the midpoint of the interval in~\eqref{eq:interval}.

\subsection{No Brownian component}
Reconsider Theorem~\ref{thm:cpp}. In this case an obvious recursion is
\[\mat\pi_n=\lambda\ba\Big((\lambda+q-d\mat{\pi}_{n-1}\mat{t})\mat I-d\mat T\Big)^{-1}.\]
Here we only need to specify $\mat\pi_0$ up to the value of $\mat\pi_0\mat t$.
\begin{prop}\label{prop:cpp}
Under the assumptions of Theorem~\ref{thm:cpp} for any $\mat\pi_0\mat t< \lambda/d$ the sequence $(\mat{\pi}_n)_{n\geq 1}$ is well-defined, satisfies $\mat 0<\mat\pi_n\mat t<\lambda/d$, and 
converges monotonically to $\mat\pi$.

Moreover, the sequence $\mat \pi_n\mat t$ converges Q-linearly with the rate
\[\mat \pi\Big(((\lambda+q)/d-\mat \pi\mat t)\mat I-\mat T\Big)^{-1}\mat t\in (0,1)\]
which is decreasing in $q\geq 0$.
\end{prop}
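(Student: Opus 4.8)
The plan is to mirror the structure of the proof of Proposition~\ref{prop:bm}, treating the scalar quantity $\mat\pi_0\mat t$ as the single driving unknown.  First I would establish that the recursion is well-defined: for the map
\[
g(\mat s)=\lambda\ba\Big((\lambda+q-d\mat s\mat t)\mat I-d\mat T\Big)^{-1},
\]
the hypothesis $\mat s\mat t<\lambda/d$ guarantees that $(\lambda+q-d\mat s\mat t)\mat I-d\mat T$ is a strictly diagonally-dominant (transient) matrix—its off-diagonal part is $-d\mat T\geq\mat 0$ while the diagonal excess is $\lambda+q-d\mat s\mat t>q\geq 0$—so the inverse exists, is nonnegative, and $\mat\pi_n=g(\mat\pi_{n-1})$ has strictly positive entries because the underlying Markov chain can reach every state.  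I would then verify the invariant $\mat 0<\mat\pi_n\mat t<\lambda/d$ inductively: right-multiplying the recursion by $\mat t$ and comparing against $\lambda\ba(-d\mat T)^{-1}\mat t$ together with the bound $\ba(-\mat T)^{-1}\mat t=1$ should pin $\mat\pi_n\mat t$ below $\lambda/d$, closing the induction so the next step is again legitimate.

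Next I would prove monotonicity by interpreting $\lambda+q-d\mat\pi_{n-1}\mat t$ as a variable killing level: exactly as in the Brownian case, a larger value of $\mat\pi_{n-1}\mat t$ yields a matrix that is ``less killed'', hence a larger inverse, hence $\mat\pi_n$ increases in every coordinate as $\mat\pi_{n-1}\mat t$ increases, and in particular $\mat\pi_n\mat t$ is monotone in $\mat\pi_{n-1}\mat t$.  This makes the scalar sequence $\mat\pi_n\mat t$ monotone (increasing or decreasing according to whether $\mat\pi_0\mat t$ lies below or above the fixed point), and the whole vector sequence $\mat\pi_n$ inherits the same monotone behaviour.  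Being monotone and bounded in $(0,\lambda/d)$, the scalar sequence converges, and continuity of $g$ forces $\mat\pi_n$ to converge to a solution of~\eqref{eq:pi}; the uniqueness part of Theorem~\ref{thm:cpp}, invoked via~\eqref{eq:pi2} to rule out the spurious root with $\mat{\pi 1}=1$ when $q>0$, then identifies the limit as the genuine $\mat\pi$.  The delicate point, as in Proposition~\ref{prop:bm}, is the boundary case $q=0,\e X_1<0$, where I would again show $\mat\pi_n\mat t$ cannot stall at $\lambda/d$ by a derivative estimate at the boundary, or else appeal to continuity in $q\downarrow 0$.

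Finally I would compute the Q-linear rate via~\eqref{eq:rate} applied to the scalar iteration $s_{n}=h(s_{n-1})$ with $h(s)=g(s)\mat t$ evaluated on the one-dimensional slice parametrised by $s=\mat\pi_0\mat t$.  Differentiating $h$ and using the matrix identity $\frac{\D}{\D s}\big((c-ds)\mat I-d\mat T\big)^{-1}=d\big((c-ds)\mat I-d\mat T\big)^{-2}$ at the fixed point, the factor $\lambda\ba(\cdots)^{-1}$ collapses to $\mat\pi$ by~\eqref{eq:pi}, giving the stated expression
\[
h'(\mat\pi\mat t)=\mat\pi\Big(((\lambda+q)/d-\mat\pi\mat t)\mat I-\mat T\Big)^{-1}\mat t.
\]
To show this lies in $(0,1)$ I would bound the resolvent from above by $(-\mat T)^{-1}$, so that the numerator does not exceed $\mat\pi(-\mat T)^{-1}\mat t$, while the extra diagonal shift $(\lambda+q)/d-\mat\pi\mat t>0$ (positive by the bound $\mat\pi\mat t<\lambda/d$) strictly decreases the inverse and forces the ratio below $1$; strict positivity is immediate since all entries are positive.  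Monotonicity in $q$ follows because $\mat\pi\mat t$ is increasing in $q$ and the shift grows with $q$, both shrinking the resolvent.  I expect the main obstacle to be the same as in the Brownian proof: rigorously excluding the degenerate fixed point in the critical case $q=0,\e X_1<0$, which the authors themselves defer to a limiting argument.
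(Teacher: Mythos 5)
Your proposal follows essentially the same route as the paper: well-definedness via the M-matrix/transient-generator structure of $(\lambda+q-d\mat\pi_{n-1}\mat t)\mat I-d\mat T$, monotonicity by interpreting $\lambda+q-d\mat\pi_{n-1}\mat t$ as a killing level, identification of the limit through \eqref{eq:pi2} and the uniqueness part of Theorem~\ref{thm:cpp}, exclusion of the degenerate fixed point $\mat\pi^*\mat t=\lambda/d$ in the case $q=0,\e X_1<0$ by a boundary estimate (the paper carries this out directly, showing $\lambda\ba(\epsilon\mat I-\mat T)^{-1}\mat t<\lambda-\epsilon d$ for small $\epsilon$, rather than deferring to a limit as you suggest it might), and the rate via \eqref{eq:rate} with the resolvent collapsing to $\mat\pi$ through \eqref{eq:pi}. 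Your bound showing the rate lies in $(0,1)$ (dominating the resolvent by $(-\mat T)^{-1}$ so the numerator is at most $\mat\pi(-\mat T)^{-1}\mat t=\mat\pi\mat 1\le 1$) is in fact more explicit than what the paper writes.

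One genuine error: in the last step you assert that $\mat\pi\mat t$ is \emph{increasing} in $q$. It is decreasing --- increasing the killing rate $q$ in phase $1$ makes the defective distribution $\mat\pi$ of $J_{\varsigma_0}$ smaller componentwise (indeed $\mat\pi\mat t=\lambda/d$ at $q=0$ with $\e X_1>0$ and $\mat\pi\mat t<\lambda/d$ for $q>0$), which is exactly what the paper uses. As written your argument does not close: if $\mat\pi\mat t$ grew with $q$, the shift $(\lambda+q)/d-\mat\pi\mat t$ would have competing contributions, and the prefactor $\mat\pi$ in the rate would not be shrinking. The correct chain is: $\mat\pi$ decreases componentwise in $q$, hence $\mat\pi\mat t$ decreases, hence the shift increases and the resolvent decreases entrywise, and together with the decreasing prefactor the rate decreases. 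A second, cosmetic slip: the off-diagonal part of $-d\mat T$ is nonpositive, not nonnegative (the matrix is an M-matrix, i.e.\ the negative of a transient generator); your conclusion that the inverse exists and is nonnegative is nevertheless correct.
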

\begin{proof}
The matrix under inverse is a transition rate matrix when $\mat\pi_{n-1}\mat t< \lambda/d$, and then it is easy to see that this condition is preserved in the sequence.
Moreover, $\mat\pi_{n+1}>\mat \pi_{n}$ iff $\mat\pi_{n}\mat t>\mat \pi_{n-1}\mat t$.
Hence the sequence $\mat\pi_n$ is monotone and thus has a limit $\mat \pi^*$ solving~\eqref{eq:pi}.

For $q>0$ from~\eqref{eq:pi2} we find that $\mat{\pi}^*\mat{1}<1$ and the characterization result completes the proof. For $q=0,\e X_1<0$ we only need to show that $\mat\pi^*\mat t\neq \lambda/d$.
Similarly to the Brownian case it is sufficient to assume that $\mat\pi_{n-1}\mat t=\lambda/d-\epsilon$ and to show that $\mat \pi_n\mat t<\mat \pi_{n-1}\mat t$ for all small enough $\epsilon>0$.
That is we need to show
\[\lambda\ba(\epsilon\mat I-\mat T)^{-1}\mat t<\lambda-\epsilon d,\]
which readily follows from $\ba(-\mat T)^{-1}\mat 1>d$.

It is left to identify the rate and according to~\eqref{eq:rate} we need to consider the derivative of
\[f(x)=\frac{\lambda}{d}\ba\Big(((\lambda+q)/d-x)\mat I-\mat T\Big)^{-1}\mat t\]
at $x=\mat \pi\mat t$, which using~\eqref{eq:pi} evaluates to the stated expression. Finally, $\mat\pi$ is decreasing in $q$ and thus the convergence rate has the same property.
\end{proof}

As a rule of thumb one may use the starting value $\mat\pi_0\mat t=\lambda/(2d)$.

\section{Numerical illustrations}
Numerical experiments in this section are performed using \textsc{Wolfram Mathematica 11} and the most straightforward implementation of required procedures. 
We choose $n=50$ and a PH distribution of Coxian type which is popular in applications, see~\cite{asmussen_laub}.
That is, the respective PH chain starts in phase 1 and it may only jump to the following phase or to terminate.
The parameters are sampled randomly: inverse standard uniforms for the rates out of each phase and $(0,0.9)$ uniforms for the killing probabilities. On average this construction results in expected value of about~$1$.
Our particular sample has expectation $1.77$ and its density is depicted in Figure~\ref{fig:zeros}.
Furthermore, we choose $\sigma=d=\lambda=1$  resulting in $\e X_1=-0.77$ and consider $q\in\{0,0.1,1\}$.

Let us recall the two methods of computing the scale function $W_q(x)$:
\begin{itemize}
\item[(Old):] Find the zeros of $\theta\mapsto\psi(\theta)-q$ in $\mathbb C$ and use formula~\eqref{eq:Wroots}.
\item[(New):] Form the transition rate matrix $\mat G$ and the vector $\mat \nu$ and combine them into $W_q(x)$ as specified in Theorem~\ref{thm:bm}.
This requires computing $(a,\mat b)$ which is done via the monotone iterative scheme in Proposition~\ref{prop:bm}. 
We stop when $|a_n-a_{n-1}|<10^{-5}$ and return $(a_n,\mat b_n)$ as an approximation of $(a,\mat b)$. Convergence of this scheme is investigated below, and by default we start at the midpoint of the interval in~\eqref{eq:interval}.
\end{itemize}
Note that the second method requires the non-negative root $\Phi_q$, which is trivial to compute thanks to convexity of $\psi(\theta),\theta\geq 0$. 
Furthermore, this root is needed by various fluctuation identities anyway.

\subsection{Comparison of the two methods}
\begin{figure}[h!]
\centering
\includegraphics[width=0.45\textwidth]{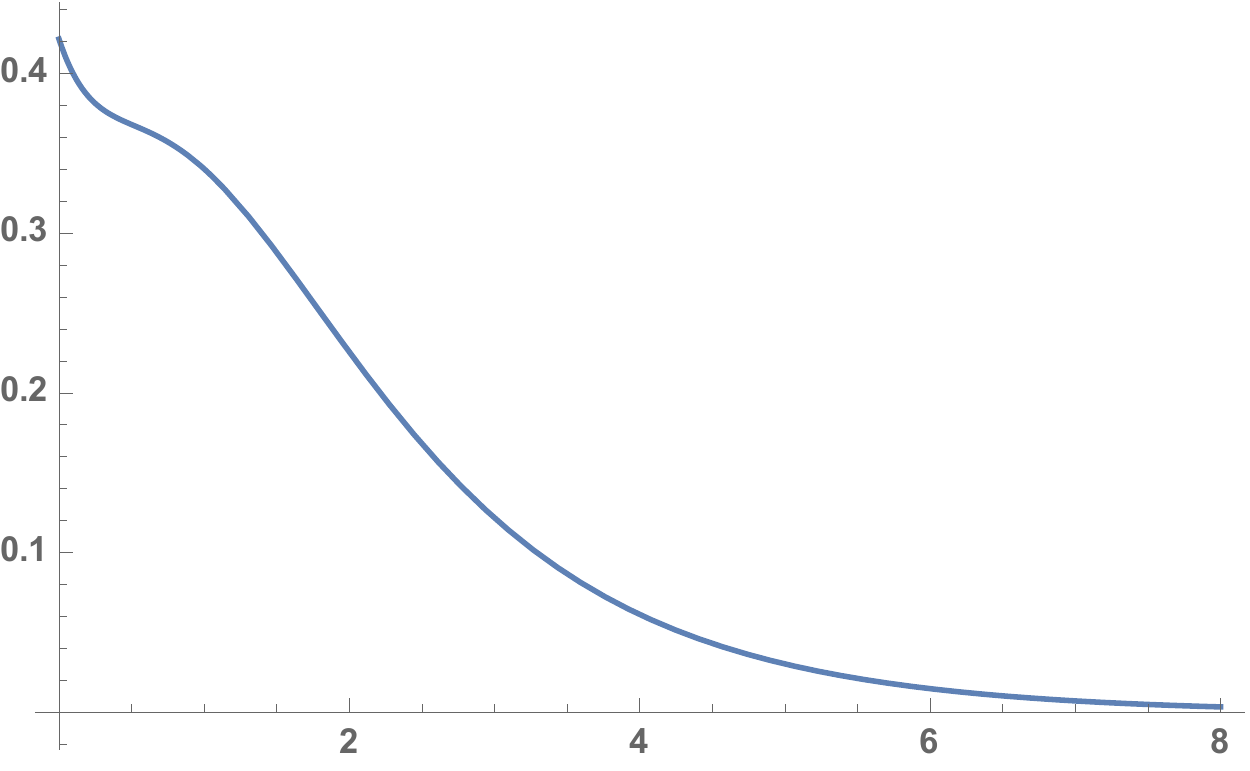}
\includegraphics[width=0.45\textwidth]{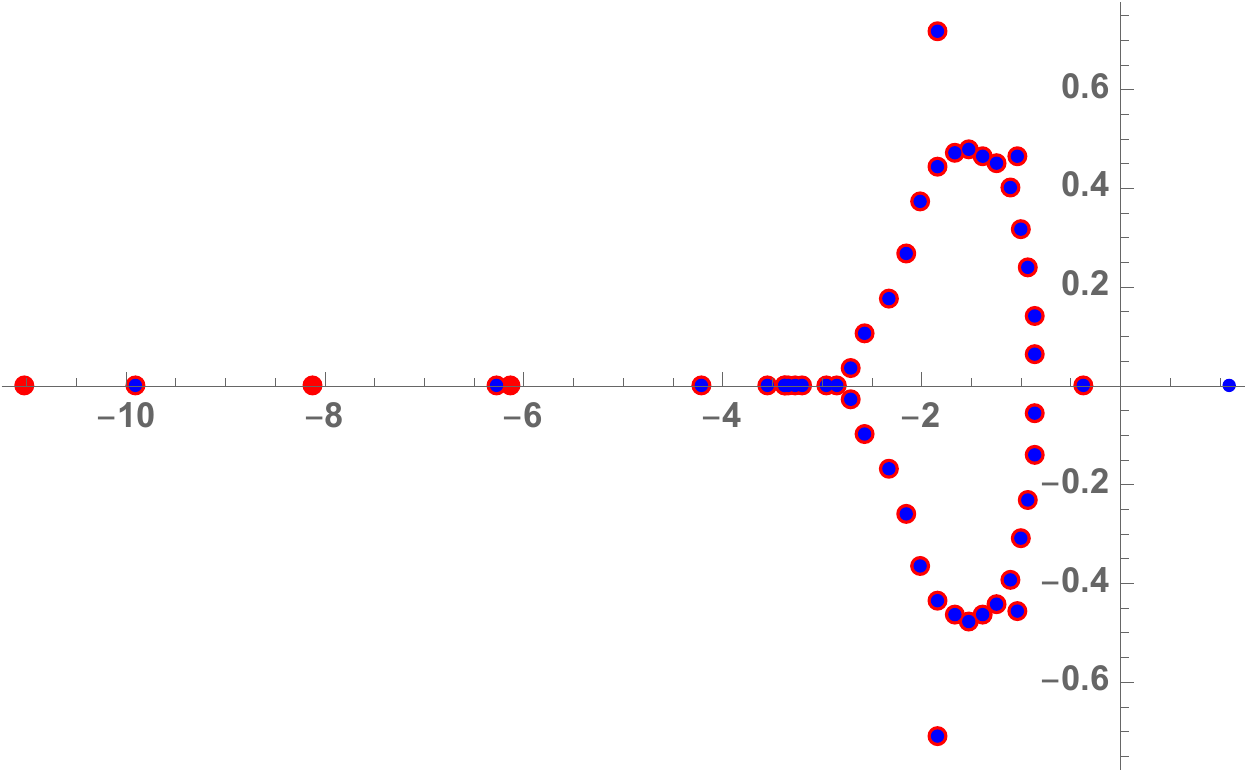}
\caption{Left: PH density. Right: The complex plane with the detected zeros of $\psi(\theta)-1$ in blue and the eigenvalues of $\mat G$ in red.}
\label{fig:zeros}
\end{figure}
Our first illustration concerns the correspondence between the zeros of $\psi(\theta)-q$ and the eigenvalues of $\mat G$ in Theorem~\ref{thm:bm} for $q=1$, see also Lemma~\ref{lem:zeros}. 
We call a generic \textsc{Solve} function and find 47 zeros, whereas the maximal number is 52. 
Additionally, we have tried various other \textsc{Mathematica} procedures including symbolic manipulation routines and \textsc{NRoots}, but could not make them work for $n=50$.
Figure~\ref{fig:zeros} depicts these 47 zeros in blue over the eigenvalues in red, and we note that 2 eigenvalues are outside of the plot range, both being large negative numbers. 

It seems reasonable to assume that our randomly generated PH is likely to be minimal (it is such for smaller $n$), and so various zeros are not detected.
It is not possible to directly verify this claim by plugging the eigenvalues into $\psi$, because the latter is often extremely sensitive at the negative values. The missing zeros may not influence $W_q$ much due to their magnitudes and the magnitudes of the corresponding derivatives, and yet numerical stability is always a concern for this method.
In this example the two methods produce almost identical scale functions, see Figure~\ref{fig:scale}.
Importantly, computing $\mat G$ and finding its eigenvalues is much faster (0.003 vs 13 seconds in this example) and unlike the current implementation of the root finding method our algorithm can easily handle $n=100$ and beyond.

\begin{figure}[h!]
\centering
\includegraphics[width=0.45\textwidth]{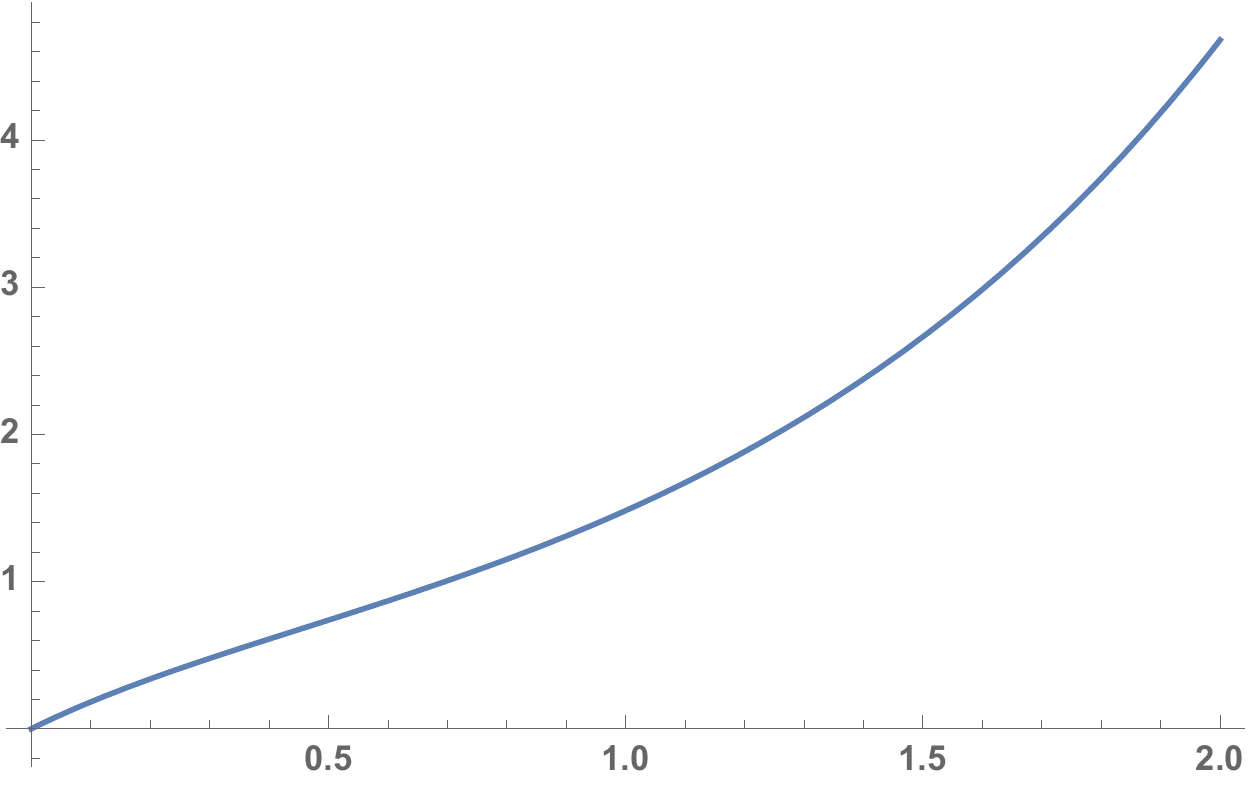}\quad
\includegraphics[width=0.45\textwidth]{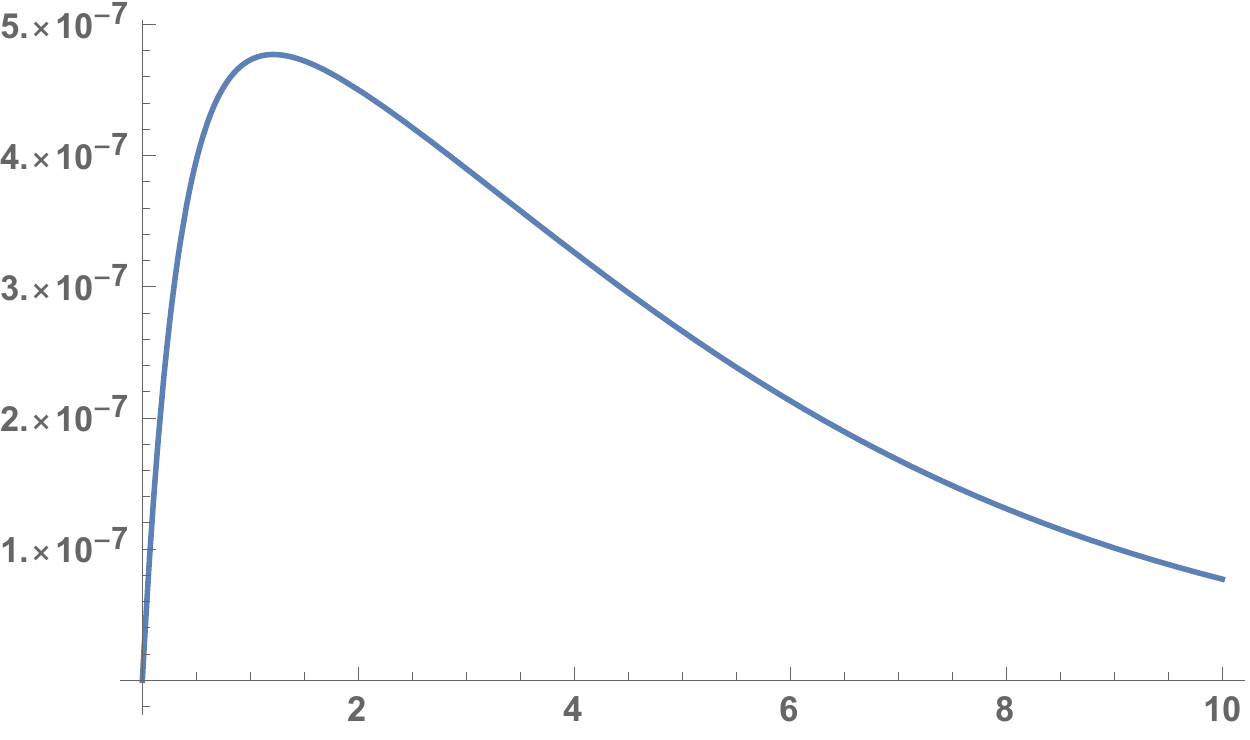}
\caption{The scale function $W_1(x)$ (left) and the absolute error between such functions obtained by the two methods (right).}
\label{fig:scale}
\end{figure}

\subsection{Convergence}
Here we illustrate convergence of the iterative scheme in Proposition~\ref{prop:bm} for $q=1$ and $q=0$.
We find $a=3.1$ and $a=2.33$ in the two cases, respectively. 
For the initial $a_0$ we examine three choices as suggested by the interval in~\eqref{eq:interval}:
(i) $2.001$ which is almost at the lower boundary, (ii) the upper boundary and (iii) the midpoint.
As mentioned above, the algorithm stops when $|a_n-a_{n-1}|<10^{-5}$.
Recall that in the case $q=0$ our algorithm must not be started at $2$, since then it stays there.
When starting at $2.001$ our algorithm will take some time to escape that fixed point as can be seen in Figure~\ref{fig:convergence} (left blue).
The Q-linear convergence rates are $0.57$ for $q=0$ and $0.1$ for $q=1$.
Thus we expect a much faster termination in the case $q=1$ after $a_n$ becomes close to the true $a$, which is confirmed by Figure~\ref{fig:convergence}.
\begin{figure}[h!]
\centering
\includegraphics[width=0.45\textwidth]{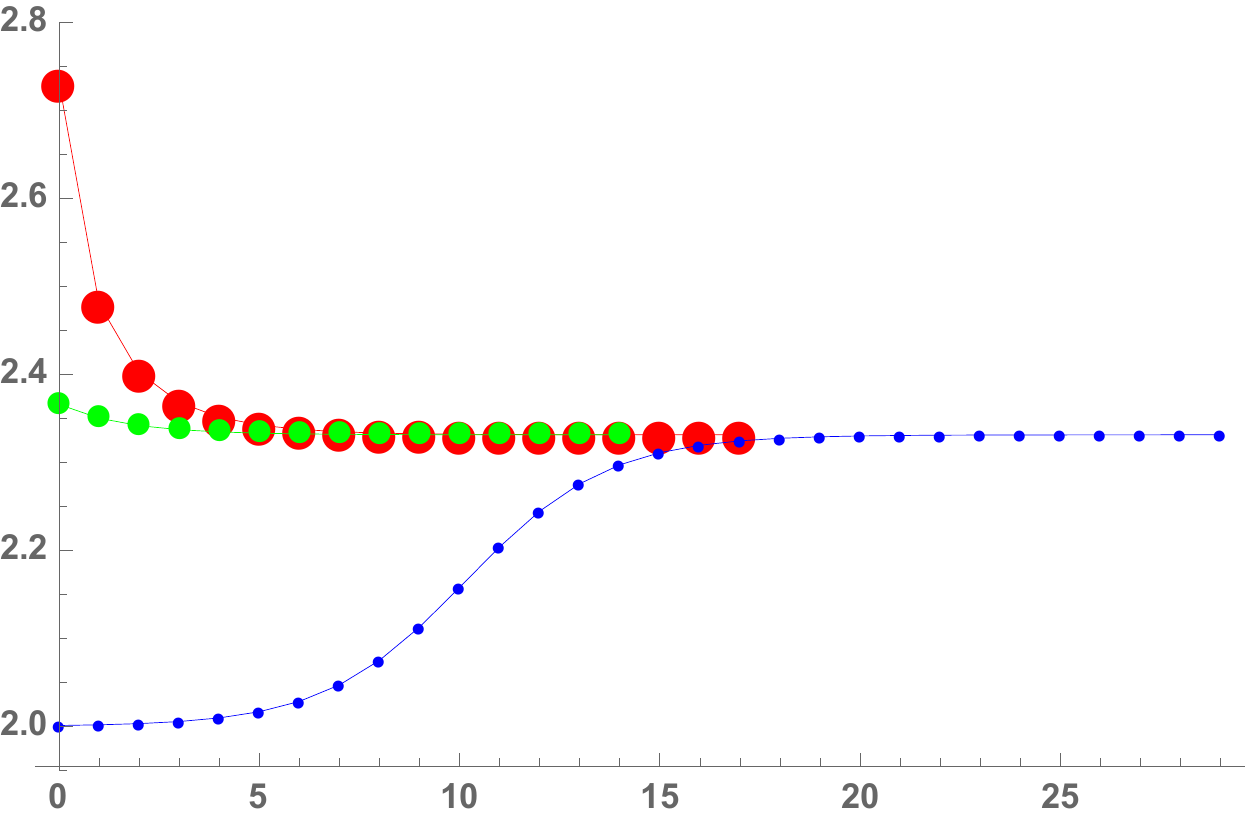}\quad
\includegraphics[width=0.45\textwidth]{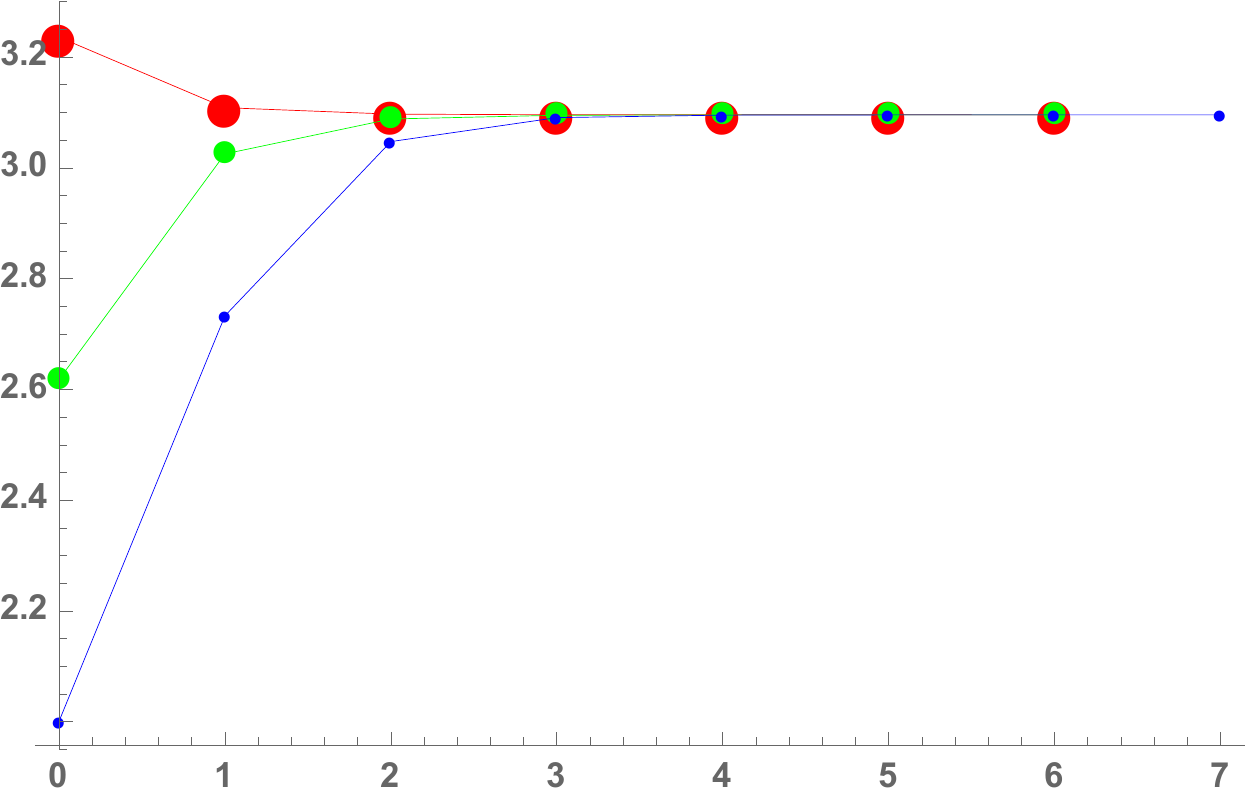}
\caption{Convergence of $a_n$ for $q=0$ (left) and for $q=1$ (right) until the difference is below $10^{-5}$.}
\label{fig:convergence}
\end{figure}

We have also tried the recursion based on~\eqref{eq:ab2}, but that resulted in non-monotone sequences (alternating in the sign of increments) exhibiting much slower convergence.

\subsection{On the number of iterations}
Finally, we randomly generate 1000 PH distributions as specified above and compute the corresponding $a$ by starting at the midpoint (case (iii), green in Figure~\ref{fig:convergence}).
\begin{figure}[h!]
\centering
\includegraphics[width=0.45\textwidth]{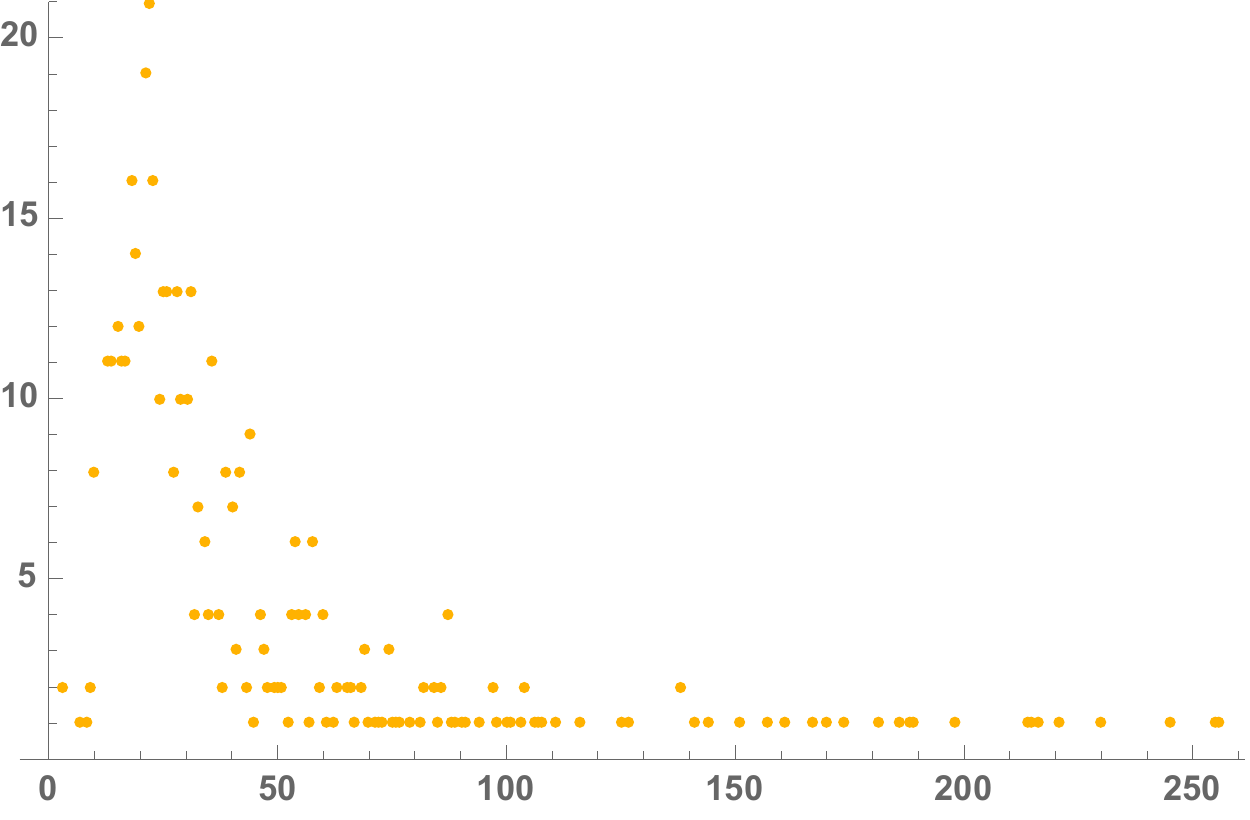}\quad
\includegraphics[width=0.25\textwidth]{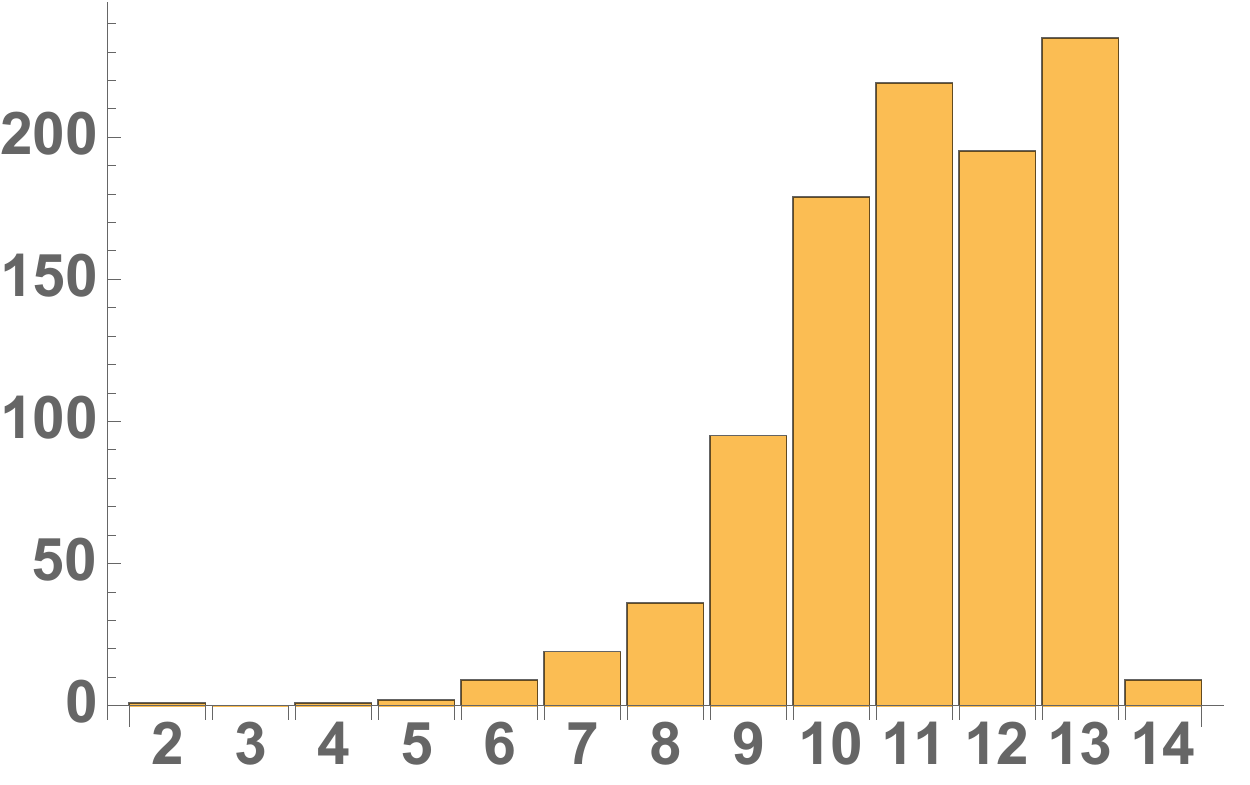}
\includegraphics[width=0.25\textwidth]{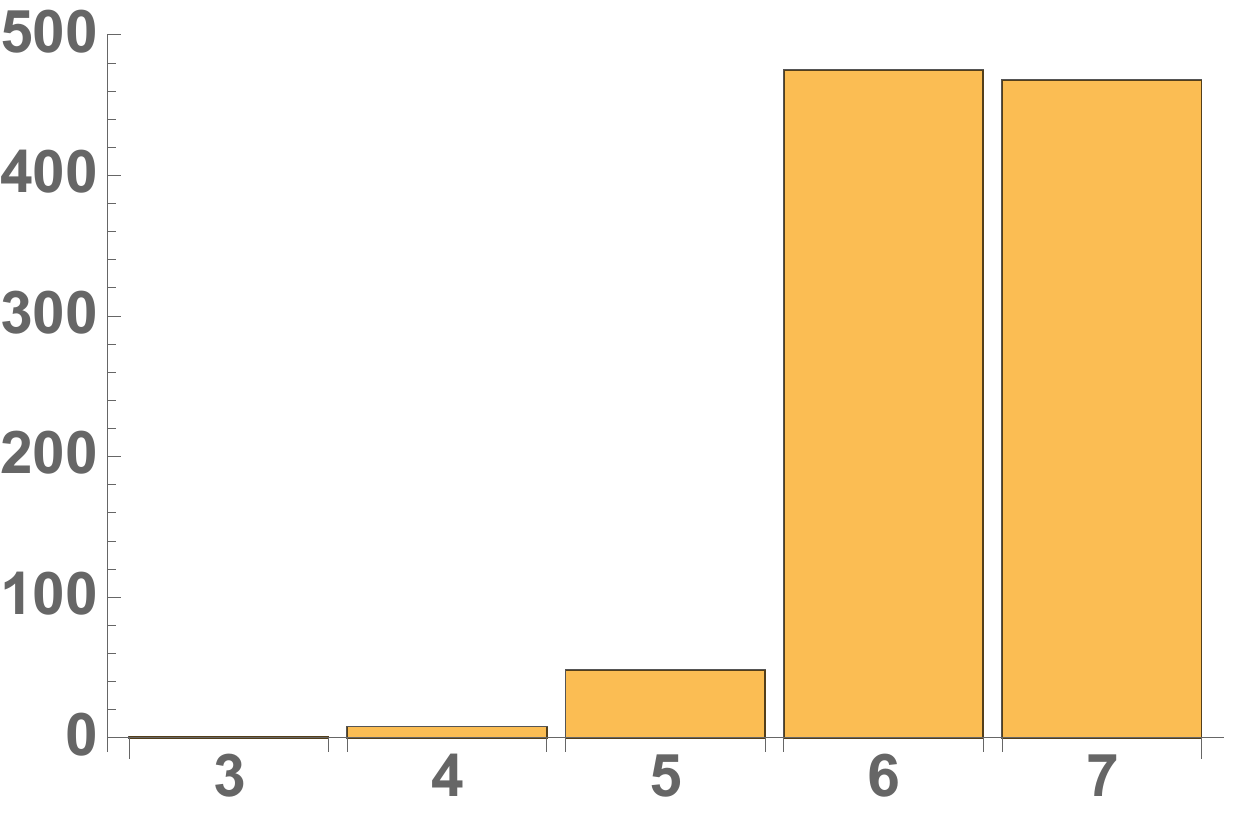}
\caption{Counts for the number of iterations among 1000 replications. Left: $q=0$ with 470 cases  corresponding to $\e X_1>0$ excluded. Right: $q=0.1$ and $q=1$.}
\label{fig:iterations}
\end{figure}
 The counts for the number of iterations for $q=0,q=0.1,q=1$ are presented in Figure~\ref{fig:iterations}, where in the case $q=0$ we have excluded the realizations with $\e X_1>0$ (470 of such) corresponding to an explicit~$a$.
The execution times are 4.6 sec.\ for $q=0$ (530 problems), 2.7 sec.\ for $q=0.1$ and 1.7 sec.\ for $q=1$ (1000 problems in both).
For $n=100$ and $q=1$ the histogram is almost the same as when $n=50$.

Additionally, we compute the Q-linear convergence rates for each generated model and provide the histograms in Figure~\ref{fig:hist}. We get mean 0.71 and maximum 0.99 for $q=0$, mean 0.41 and maximum 0.5 for $q=0.1$, mean 0.11 and maximum 0.15. This is in agreement with the fact that the convergence rate is decreasing in~$q$.
Thus the numerical problem of finding $a,\mat b$ becomes simpler for larger values of the killing rate $q\geq 0$. 
\begin{figure}[h!]
\centering
\includegraphics[width=0.3\textwidth]{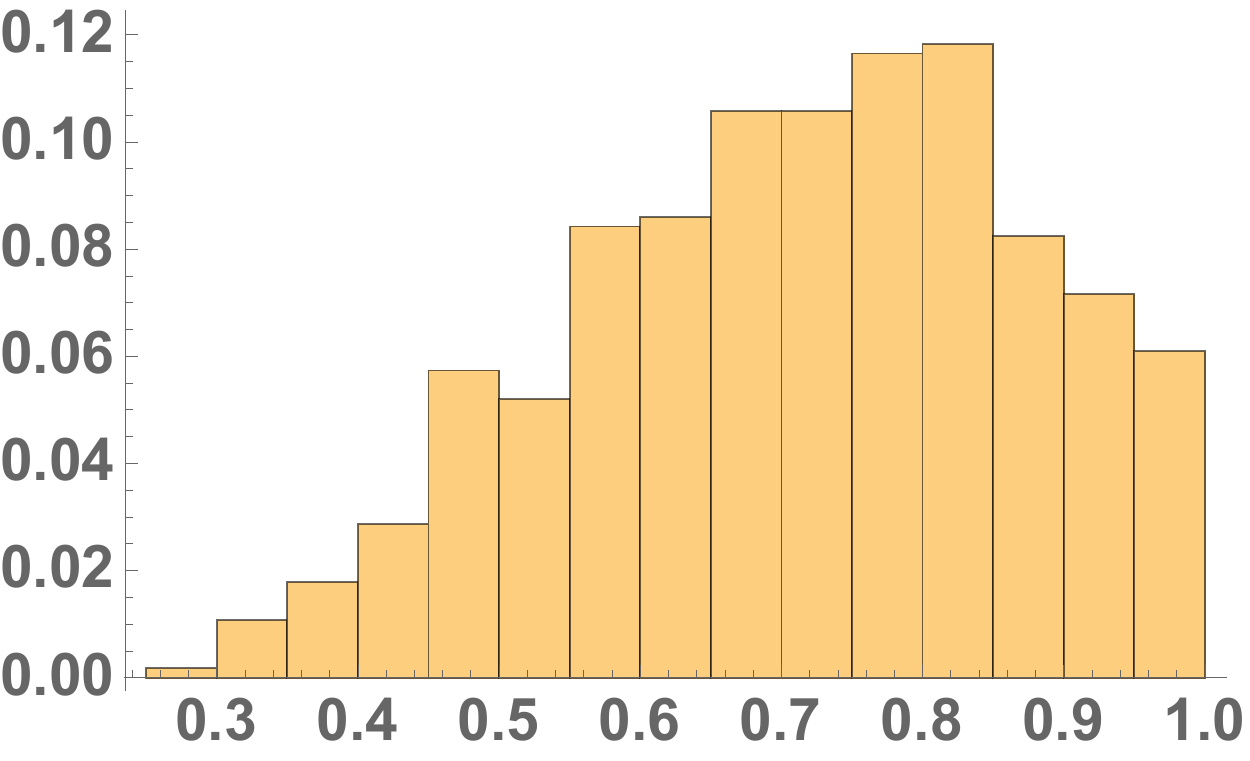}
\includegraphics[width=0.3\textwidth]{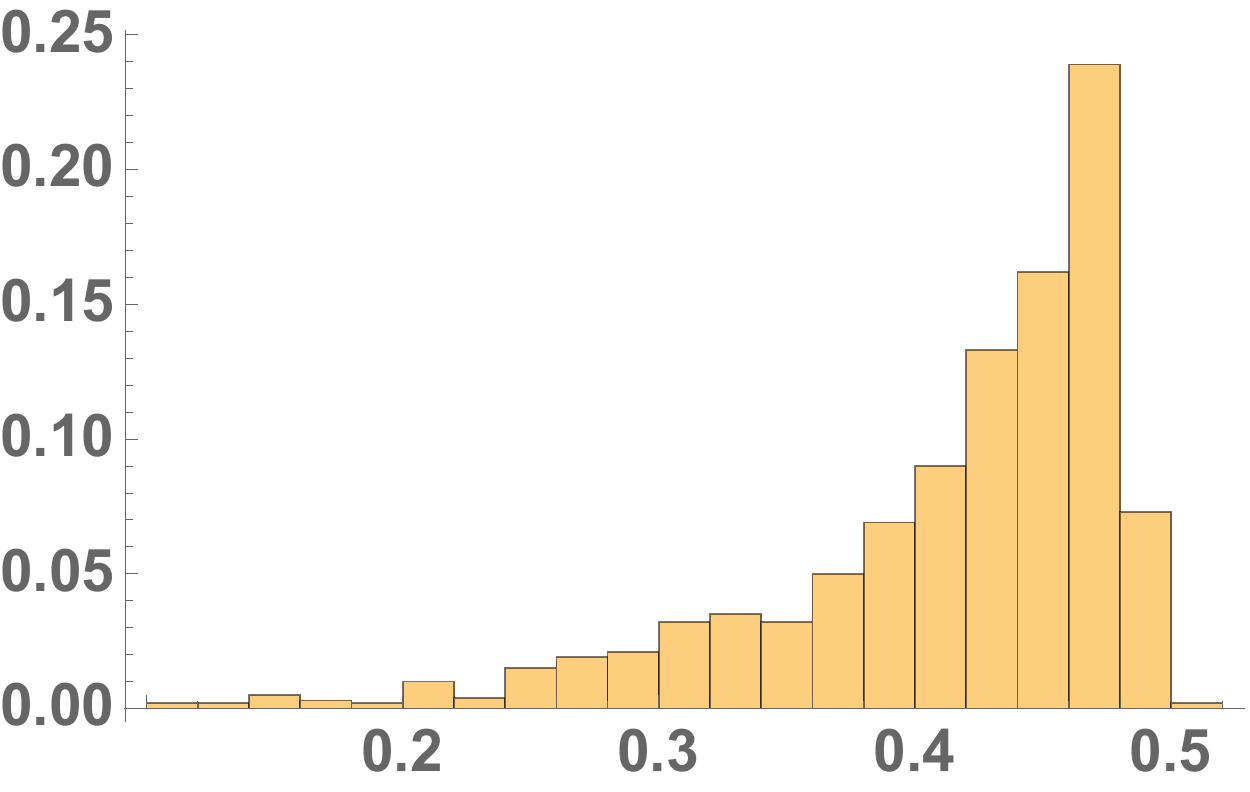}
\includegraphics[width=0.3\textwidth]{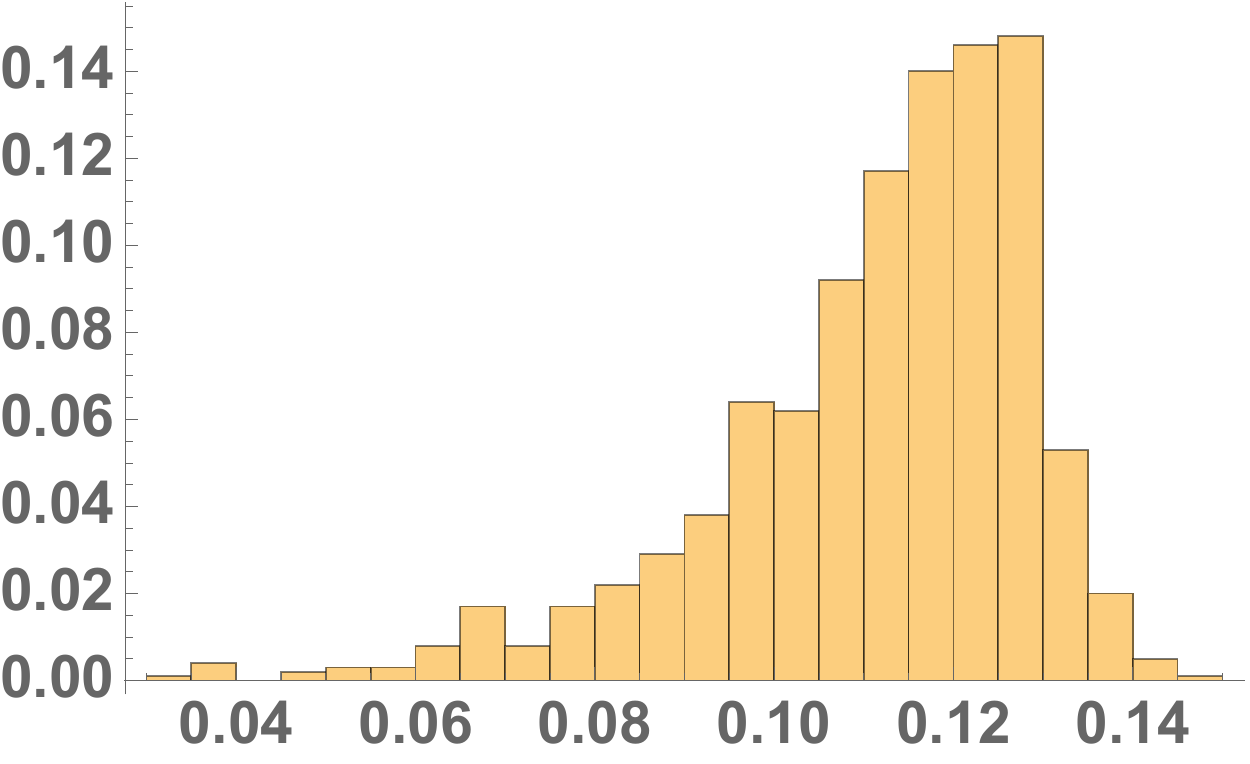}
\caption{Histograms of the convergence rate. Left to right: $q=0$ (530 cases), $q=0.1$ and $q=1$ (1000 cases both).}
\label{fig:hist}
\end{figure}

In conclusion, the monotone iterative scheme in Proposition~\ref{prop:bm} performs very well for large $n$ in our setting of randomly generated PH distributions of Coxian type.
Importantly, it is trivial to implement, and one can easily provide error guarantees by taking advantage of monotone convergence.
Finally, the resulting exponential form of $W_q$ in Theorem~\ref{thm:bm} is convenient for various further calculations.

\section*{Acknowledgments}
The financial support of Sapere Aude Starting Grant 8049-00021B ``Distributional Robustness in Assessment of Extreme Risk'' 
 is gratefully acknowledged. 
 

\end{document}